\numberwithin{equation}{section}
\newtheorem{theorem}{Theorem}[section]
\newtheorem*{proposition*}{Proposition}
\newtheorem{proposition}[theorem]{Proposition}
\newtheorem{corollary}[theorem]{Corollary}
\newtheorem{lemma}[theorem]{Lemma}
\newtheorem{remark}[theorem]{Remark}
\newtheorem{question}[theorem]{Question}
\DeclareMathOperator{\Hom}{Hom}
\DeclareMathOperator{\Ext}{Ext}
\DeclareMathOperator{\Tor}{Tor}
\DeclareMathOperator{\Supp}{Supp}
\DeclareMathOperator{\depth}{depth}
\DeclareMathOperator{\pd}{pd}
\DeclareMathOperator{\id}{id}
\DeclareMathOperator{\type}{r}
\DeclareMathOperator{\D}{D}
\title{Bass and Betti numbers of a module and its deficiency modules}
\author{Thiago Fiel}
\address{Departamento de Matemática, Universidade Federal da Paraíba - 58051-900, João Pessoa, PB, Brazil}
\email{thiagofieldacostacabral@gmail.com}
\author{Rafael Holanda}
\email{rfh@academico.ufpb.br, rf.holanda@gmail.com}
\begin{document}

\maketitle

{\let\thefootnote\relax\footnote{{{\it Date:} \today}}}
{\let\thefootnote\relax\footnote{{{\it 2020 Mathematics Subject Classification.} Primary 13C14, 13D45; Secondary 13H10, 14B15.}}}
{\let\thefootnote\relax\footnote{{{\it Key words and phrases.} Generalized Cohen-Macaulay module, deficiency modules, Auslander-Reiten conjecture.}}}
{\let\thefootnote\relax\footnote{{The second-named author was supported by a CAPES Doctoral Scholarship.}}}

\begin{abstract}
This paper aims to provide several relations between Bass and Betti numbers of a given module and its deficiency modules. Such relations and the tools used throughout allow us to generalize some results of Foxby, characterize Cohen-Macaulay modules in equidimensionality terms, study the Cohen-Macaulay and complete intersection properties of a ring and furnish a case for the Auslander-Reiten conjecture.
\end{abstract}

\section{Introduction}

In the celebrated paper \cite{F}, Foxby proved that over a Gorenstein local ring $R$ of dimension $d$, a Cohen-Macaulay $R$-module $M$ of dimension $t$ is such that $$\beta_j(M)=\mu^{j+t}(\Ext^{d-t}_R(M,R))$$ and $$\mu^j(M)=\beta_{j-t}(\Ext^{d-t}_R(M,R))$$ for all $j\geq0$. In particular, $\pd_RM<\infty$ if and only if $\id_R\Ext^{d-t}_R(M,R)<\infty$, and $\id_RM<\infty$ if and only if $\pd_R\Ext^{d-t}_R(M,R)<\infty$. Recently, Freitas and Jorge-Pérez \cite{FJP} generalized the first equivalence for local rings which are factor of Gorenstein local rings. In this paper, we shall look at these results in a wider situation as follows.

Schenzel \cite{S} generalized the notion of canonical module in the following sense. Given a Noetherian local ring $R$ which is a factor ring of a $s$-dimensional Gorenstein local ring $S$ and a finite $R$-module $M$, the \emph{$j$-th deficiency module of $M$} is defined as $$K^j(M)=\Ext^{s-j}_S(M,S)$$ for all $j=0,...,\dim_RM$. Local duality assures that these modules are well-defined. Particularly, $K(M):=K^{\dim_RM}(M)$ is called the \emph{canonical module of $M$}. In a certain sense, the deficiency modules of $M$ measure the extent of the failure of $M$ to be Cohen-Macaulay.

In this paper, we shall look for relations between Bass and Betti numbers of a given module and its deficiency modules. As Foxby provided the relations above for Cohen-Macaulay modules over a Gorenstein local ring, we furnish the same relations for generalized Cohen-Macaulay canonically Cohen-Macaulay modules with zeroth and first deficiency modules of positive depth over a local ring which is a factor of a Gorenstein local ring, see Theorem \ref{foxbygeneralization}. Furthermore, the theorems \ref{foxbygeneralization2} and \ref{foxbygeneralization3} show the same relations for arbitrary finite $R$-modules when certain homological conditions over its deficiency modules are imposed.

Besides such generalizations, we exhibit bounds for the Bass numbers (Betti numbers) of a module in terms of the Betti numbers (Bass numbers) of its deficiency modules, see the theorems \ref{mu<beta} and \ref{beta<mu}. They provide several applications that are worked out through this paper. Three examples of such applications are Corollary \ref{bassgeneralization}, providing the Cohen-Macaulay property of a local ring in terms of homological conditions over deficiency modules, Corollary \ref{CIchar} furnishing a characterization of the complete intersection property in terms of the first and second Bass numbers of the residue field, and Corollary \ref{AR} that states that the Auslander-Reiten conjecture holds for modules such that its deficiency modules have finite injective dimensions, generalizing then a similar application given quite recently in \cite{FJP}.

Our methods are especially concerned with studying the behaviour of some spectral sequences. The first of them is called Foxby spectral sequence \ref{foxbyss}, as it was firstly used by Foxby in \cite{F}. The first applications of such spectral sequences regard general information on the canonical module of a generalized Cohen-Macaulay module or an equidimensional module, see Theorem \ref{GCMtheorem} and Proposition \ref{equidimensional}. These results provide sufficient conditions for when the module is also canonically Cohen-Macaulay and its canonical module is generalized Cohen-Macaulay, see the corollaries \ref{GCMCCM} and \ref{GCMcanonicalmodule}, also a characterization of Cohen-Macaulay modules in Corollary \ref{CMequivalence} and a version for generalized Cohen-Macaulay modules of a Schenzel's result, see Corollary \ref{weakercmcanonicalmodule}.

\section{Generalized Cohen-Macaulay modules}

\noindent\textbf{Setup.} Throughout this paper, $R$ will always denote a commutative Noetherian local ring with non-zero unity, maximal ideal $\mathfrak{m}$ and residue class field $k$. Also, $R$ is supposed to be a factor of a Gorenstein local ring $S$ of dimension $s$, i.e., there exists a surjective ring homomorphism $S\rightarrow R$. We say that an $R$-module $M$ is \emph{finite} if it is a finitely generated $R$-module and denote by $M^\vee$ its Matlis dual.

For an $R$-module $M$, $\pd_RM$ and $\id_RM$ denote, respectively, the projective dimension and injective dimension of $M$. Further, $\beta_i(M)=\dim_k\Tor^R_i(k,M)$ is the $i$-th Betti number of $M$, $\mu^i(M)=\dim_k\Ext^i_R(k,M)$ is the $i$-th Bass number of $M$ and $\type(M)=\dim_k\Ext^{\depth_RN}_R(k,M)$ is its type.

The following spectral sequences have first appeared in the \cite{F}.

\begin{lemma}[Foxby spectral sequences]\label{foxbyss}
Given a finite $R$-module $X$, an $R$-module $Y$ and a $S$-module $Z$, if either $\pd_RX<\infty$ or $\id_SZ<\infty$, then there exist a graded $R$-module $H$ and first quadrant spectral sequences
$$E_2^{p,q}=\Ext^p_S(\Ext^q_R(X,Y),Z)\Rightarrow_p H^{q-p}$$ and
$$'E_2^{p,q}=\Tor_R^p(X,\Ext^q_S(Y,Z))\Rightarrow_p H^{p-q}.$$
\end{lemma}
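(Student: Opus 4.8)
The plan is to obtain both spectral sequences from a single double complex. Since $R$ is Noetherian and $X$ is finite, fix a resolution $P_\bullet\to X$ by finite free $R$-modules, and let $Z\to E^\bullet$ be an injective resolution over $S$; the hypothesis lets us take $P_\bullet$ bounded when $\pd_R X<\infty$ and $E^\bullet$ bounded when $\id_S Z<\infty$, and I take one of these choices accordingly. For an $S$-module $N$ the group $\Hom_S(Y,N)$ inherits an $R$-module structure from $Y$, so we may form the first-quadrant double complex of $R$-modules $C^{p,q}=P_p\otimes_R\Hom_S(Y,E^q)$, the horizontal differential coming from $P_\bullet$ and the vertical one from $E^\bullet$. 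Put $H=H^\bullet(\mathrm{Tot}\,C)$; the two asserted spectral sequences will be the two spectral sequences of $C$, both abutting to $H$.

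The device that makes $C$ do double duty is that, because each $P_p$ is \emph{finite} free over $R$, the canonical $R$-linear map $P_p\otimes_R\Hom_S(Y,E^q)\to\Hom_S(\Hom_R(P_p,Y),E^q)$ sending $x\otimes\varphi$ to $f\mapsto\varphi(f(x))$ is an isomorphism; since it is compatible with both differentials, it yields an isomorphism of double complexes $C\cong\Hom_S(\Hom_R(P_\bullet,Y),E^\bullet)$.

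I now run the two spectral sequences of $C$ in the two orders. Using the form $C^{p,q}=P_p\otimes_R\Hom_S(Y,E^q)$ and taking cohomology first along $E^\bullet$: as $P_p$ is $R$-flat, $P_p\otimes_R-$ commutes with cohomology, so the first page is $P_p\otimes_R\Ext^q_S(Y,Z)$, and then taking homology along $P_\bullet$ gives the $E_2$-term $\Tor^R_p(X,\Ext^q_S(Y,Z))$; this is the $'E$ sequence. Using instead the form $C\cong\Hom_S(\Hom_R(P_\bullet,Y),E^\bullet)$ and taking cohomology first along $P_\bullet$: since $E^q$ is an injective $S$-module, $\Hom_S(-,E^q)$ is exact, so applying it to the complex $\Hom_R(P_\bullet,Y)$ (whose cohomology is $\Ext^q_R(X,Y)$) yields $\Hom_S(\Ext^q_R(X,Y),E^q)$, and then taking cohomology along $E^\bullet$ gives $\Ext^p_S(\Ext^q_R(X,Y),Z)$ after relabelling; this is the $E$ sequence. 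Both abut to $H$; in $\mathrm{Tot}\,C$ the differential raises the $E^\bullet$-degree by one or lowers the $P_\bullet$-degree by one, so the total degree is the difference of the two, which is why the abutment reads $H^{q-p}$ for the $E$ sequence and $H^{p-q}$ for the $'E$ sequence (cohomological degree $q-p$ being the same graded piece as homological degree $p-q$), the inner and outer indices having exchanged roles.

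The one point needing care is convergence. The total degree here is the difference of the two resolution indices, hence not bounded below, so the graded pieces of $\mathrm{Tot}\,C$ are a priori infinite direct sums and the naive first-quadrant convergence theorem does not apply directly. This is precisely where the hypothesis is used: if $\pd_R X<\infty$ then $C^{p,q}=0$ for $p\gg 0$, while if $\id_S Z<\infty$ then $C^{p,q}=0$ for $q\gg 0$; in either case $C$ is bounded in one direction, every total degree involves only finitely many terms, and the standard convergence theorem for the two spectral sequences of such a double complex gives the convergence of both to $H^\bullet(\mathrm{Tot}\,C)$. The remaining verifications—that the isomorphism of the second paragraph is one of double complexes and that all structures in sight are $R$-linear, so that $H$ is genuinely a graded $R$-module—are routine.
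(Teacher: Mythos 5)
Your argument is correct and is essentially the paper's own proof: both build the double complex isomorphism $\Hom_S(\Hom_R(F_\bullet,Y),E^\bullet)\simeq F_\bullet\otimes_R\Hom_S(Y,E^\bullet)$ from a free $R$-resolution of $X$ and an injective $S$-resolution of $Z$ and take its two spectral sequences. You merely make explicit two points the paper leaves implicit, namely that the hom-tensor swap needs the free modules to be \emph{finite} free, and that the finiteness hypothesis on $\pd_RX$ or $\id_SZ$ is what guarantees convergence since the total degree $q-p$ is otherwise unbounded.
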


\begin{proof}
Let $F_\bullet$ be a free $R$-resolution of $X$ and let $E^\bullet$ be an injective $S$-resolution of $Z$. The desired spectral sequences yield from the isomorphism of first quadrant double complexes
$$\Hom_S(\Hom_R(F_\bullet,Y),E^\bullet)\simeq F_\bullet\otimes_R\Hom_S(Y,E^\bullet).$$
\end{proof}

The first application of the Foxby spectral sequences \ref{foxbyss} is a generalization of a well-known result about Cohen-Macaulay modules and its canonical modules, see \cite[Theorem 1.14]{S}. First, we need an auxiliary lemma.

We say that a finite $R$-module $M$ satisfies \emph{Serre's condition $S_k$}, for $k$ being a non-negative integer, provided $$\depth_{R_\mathfrak{p}}M_\mathfrak{p}\geq\min\{k,\dim_{R_\mathfrak{p}}M_{\mathfrak{p}}\}$$ for all $\mathfrak{p}\in\Supp M$.

\begin{lemma}\cite[Lemma 1.9]{S}\label{schenzellemma}
Let $M$ be a finite $R$-module of dimension $t$. The modules $K^j(M)$ satisfy the following properties.
\begin{itemize}
    \item [(i)] $\dim_R K^j(M)\leq j$ for all integer $j$ and $\dim_RK(M)=t$;
    \item [(ii)] Suppose that $M$ is equidimensional. Then, $M$ satisfies Serre's condition $S_k$ if and only if $\dim_RK^j(M)\leq j-k$, for all $0\leq j<t$.
\end{itemize}
\end{lemma}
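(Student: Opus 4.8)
The plan is to reduce the entire statement to the vanishing and non-vanishing theorems for local cohomology, via a localized form of local duality. Since $S$ is Gorenstein local, it is Cohen--Macaulay, hence catenary and equidimensional; consequently $\height\mathfrak{q}=s-\dim S/\mathfrak{q}$ for every $\mathfrak{q}\in\Spec S$, and $R=S/I$ is catenary as a quotient of $S$. Fix $\mathfrak{p}\in\Supp_R M$ and let $\mathfrak{q}\subseteq S$ be its preimage, so that $S/\mathfrak{q}\cong R/\mathfrak{p}$ and $S_\mathfrak{q}$ is Gorenstein local of dimension $\height\mathfrak{q}=s-\dim R/\mathfrak{p}$. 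Because $\Ext$ commutes with localization, local duality over $S_\mathfrak{q}$ gives
$$K^j(M)_\mathfrak{p}\;=\;\Ext^{\,s-j}_{S_\mathfrak{q}}(M_\mathfrak{p},S_\mathfrak{q})\;\cong\;H^{\,j-\dim R/\mathfrak{p}}_{\mathfrak{p}R_\mathfrak{p}}(M_\mathfrak{p})^{\vee},$$
the local cohomology being taken over $R_\mathfrak{p}$ (independence of the base ring). Since Matlis duality is faithful, the first thing I would record is the equivalence: $\mathfrak{p}\in\Supp_R K^j(M)$ if and only if $H^{\,j-\dim R/\mathfrak{p}}_{\mathfrak{p}R_\mathfrak{p}}(M_\mathfrak{p})\neq 0$.

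From this, part (i) is almost immediate. If $\mathfrak{p}\in\Supp_R K^j(M)$, then $H^{\,j-\dim R/\mathfrak{p}}_{\mathfrak{p}R_\mathfrak{p}}(M_\mathfrak{p})\neq 0$, which already forces $j-\dim R/\mathfrak{p}\ge 0$, i.e.\ $\dim R/\mathfrak{p}\le j$; taking the supremum over such $\mathfrak{p}$ yields $\dim_R K^j(M)\le j$, and in particular $\dim_R K(M)\le t$. For the reverse inequality I would choose $\mathfrak{p}\in\Min_R M$ with $\dim R/\mathfrak{p}=t$ (which exists since $\dim_R M=t$); then $\dim_{R_\mathfrak{p}}M_\mathfrak{p}=0$, so $H^0_{\mathfrak{p}R_\mathfrak{p}}(M_\mathfrak{p})=M_\mathfrak{p}\neq 0$, and as $t-\dim R/\mathfrak{p}=0$ we get $K^t(M)_\mathfrak{p}\neq 0$, whence $\dim_R K(M)\ge \dim R/\mathfrak{p}=t$.

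For part (ii) I would first upgrade the dimension bookkeeping: since $M$ is equidimensional and $R$ is catenary, $\dim_{R_\mathfrak{p}}M_\mathfrak{p}=t-\dim R/\mathfrak{p}$ for every $\mathfrak{p}\in\Supp_R M$ (pick $\mathfrak{p}_0\in\Min_R M$ with $\mathfrak{p}_0\subseteq\mathfrak{p}$, so $\dim R/\mathfrak{p}_0=t$; catenarity gives $\height(\mathfrak{p}/\mathfrak{p}_0)+\dim R/\mathfrak{p}=t$, and this combines with the standard inequality $\dim_{R_\mathfrak{p}}M_\mathfrak{p}+\dim R/\mathfrak{p}\le t$). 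Assume now that $M$ satisfies $S_k$ and let $\mathfrak{p}\in\Supp_R K^j(M)$ with $0\le j<t$; put $i=j-\dim R/\mathfrak{p}\ge 0$. Then $H^i_{\mathfrak{p}R_\mathfrak{p}}(M_\mathfrak{p})\neq 0$ with $i<t-\dim R/\mathfrak{p}=\dim_{R_\mathfrak{p}}M_\mathfrak{p}$, so $M_\mathfrak{p}$ cannot be Cohen--Macaulay; by $S_k$ this forces $\depth_{R_\mathfrak{p}}M_\mathfrak{p}\ge k$, hence $i\ge\depth_{R_\mathfrak{p}}M_\mathfrak{p}\ge k$, that is $\dim R/\mathfrak{p}\le j-k$, and the supremum over such $\mathfrak{p}$ gives $\dim_R K^j(M)\le j-k$. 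For the converse, assume $\dim_R K^j(M)\le j-k$ for all $0\le j<t$ and suppose, towards a contradiction, that some $\mathfrak{p}\in\Supp_R M$ has $\depth_{R_\mathfrak{p}}M_\mathfrak{p}<\min\{k,\dim_{R_\mathfrak{p}}M_\mathfrak{p}\}$; set $i=\depth_{R_\mathfrak{p}}M_\mathfrak{p}$ and $j=i+\dim R/\mathfrak{p}$. Then $H^i_{\mathfrak{p}R_\mathfrak{p}}(M_\mathfrak{p})\neq 0$ shows $\mathfrak{p}\in\Supp_R K^j(M)$, while $i<\dim_{R_\mathfrak{p}}M_\mathfrak{p}$ gives $j<t$, so the hypothesis applies and yields $\dim R/\mathfrak{p}\le j-k=i+\dim R/\mathfrak{p}-k$, i.e.\ $i\ge k$, contradicting $i<k$. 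Hence $M$ satisfies $S_k$.

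The only non-formal ingredient, and the point I expect to take the most care, is the localized local duality isomorphism together with the two dimension identities $\height\mathfrak{q}=s-\dim R/\mathfrak{p}$ and $\dim_{R_\mathfrak{p}}M_\mathfrak{p}=t-\dim R/\mathfrak{p}$; these rest on the Cohen--Macaulayness (hence catenarity and equidimensionality) of $S$, on $R$ being catenary as a quotient of $S$, and on $M$ being equidimensional. Once these are in place, both parts are direct consequences of Grothendieck's vanishing theorem, the non-vanishing of $H^{\depth}_{\mathfrak{p}R_\mathfrak{p}}(M_\mathfrak{p})$, and the characterization of Cohen--Macaulayness through the concentration of local cohomology in a single degree.
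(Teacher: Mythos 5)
Your proof is correct. Note that the paper itself does not prove this lemma at all --- it is quoted with a citation to Schenzel \cite[Lemma 1.9]{S} --- and your argument is essentially a reconstruction of the standard (Schenzel-style) proof: localize the duality over the Gorenstein cover at the preimage prime, identify $\Supp_R K^j(M)$ through vanishing and non-vanishing of $H^i_{\mathfrak{p}R_\mathfrak{p}}(M_\mathfrak{p})$, and use catenarity of $S$ (hence of $R$) plus equidimensionality of $M$ to get $\dim_{R_\mathfrak{p}}M_\mathfrak{p}=t-\dim R/\mathfrak{p}$. One cosmetic caveat: over the non-complete $S_\mathfrak{q}$ the clean form of local duality is $\bigl(K^j(M)_\mathfrak{p}\bigr)^{\vee}\cong H^{\,j-\dim R/\mathfrak{p}}_{\mathfrak{p}R_\mathfrak{p}}(M_\mathfrak{p})$, while the isomorphism as you displayed it only holds after completion of $K^j(M)_\mathfrak{p}$; since you only extract the support equivalence via faithfulness of Matlis duality, this does not affect the argument, but the statement should be phrased in that direction.
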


A finite $R$-module $M$ is said to be \emph{generalized Cohen-Macaulay} if $H^j_\mathfrak{m}(M)$ is of finite length for all $j<\dim_RM$. It should be noticed, due to Matlis duality, that it is equivalent to say that $K^j(M)$ is of finite length for all $j<\dim_RM.$

\begin{theorem}\label{GCMtheorem}
Let $M$ be a generalized Cohen-Macaulay $R$-module of dimension $t$. The following statements hold.
\begin{itemize}
    \item [(i)] There exists isomorphism $$K^0(K(M))\simeq\Tor_{-t}^S(M,S);$$
    \item [(ii)] There exists a five-term type exact sequence
    $$\xymatrix@=1em{
    \Tor^S_{-t+2}(M,S)\ar[r] & K^2(K(M))\ar[r] & K^0(K^{t-1}(M))\ar[dl] \\ & \Tor^S_{-t+1}(M,S)\ar[r] & K^1(K(M))\ar[r] & 0
    }$$
    \item [(iii)] There exists an exact sequence
    $$\xymatrix@=1em{
    0\ar[r] & K^0(K^0(M))\ar[r] & M\ar[r] & K(K(M))\ar[r] & K^0(K^1(M))\ar[r] & 0;
    }$$
    \item [(iv)] If $t\geq3$, then there exist isomorphisms $$K^{t-j}(K(M))\simeq K^0(K^{j+1}(M))$$ for all $1\leq j\leq t-2$.
\end{itemize}
\end{theorem}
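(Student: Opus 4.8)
The plan is to run the Foxby spectral sequences of Lemma~\ref{foxbyss} over the Gorenstein ring $S$ itself. Since $S$ is trivially a factor ring of the Gorenstein ring $S$ and $\id_S S=s<\infty$, the lemma applies with $S$ in the role of $R$ and with $X=M$, $Y=Z=S$, producing a graded module $H$ and spectral sequences
$$E_2^{p,q}=\Ext^p_S(\Ext^q_S(M,S),S)\Rightarrow_p H^{q-p},\qquad {}'E_2^{p,q}=\Tor^p_S(M,\Ext^q_S(S,S))\Rightarrow_p H^{p-q}.$$
The second collapses, as $\Ext^q_S(S,S)=0$ for $q\neq0$ and $\Tor^p_S(M,S)=0$ for $p\neq0$, so that $H^n\cong\Tor^S_n(M,S)$ (equivalently $H^0\cong M$ and $H^n=0$ for $n\neq0$, though I keep the $\Tor$ notation). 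Rewriting the first spectral sequence through $\Ext^i_S(M,S)=K^{s-i}(M)$ and $\Ext^i_S(-,S)=K^{s-i}(-)$ turns it into $E_2^{p,q}=K^{s-p}(K^{s-q}(M))$; after the change of variables $a=s-p$, $b=s-q$, the term $K^a(K^b(M))$ sits in bidegree $(s-a,s-b)$, abuts to $\Tor^S_{a-b}(M,S)$, and the differentials take the form $d_r\colon K^a(K^b(M))\to K^{a-r}(K^{b-r+1}(M))$.

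The decisive input is the shape of the $E_2$-page. By Lemma~\ref{schenzellemma}(i) and the generalized Cohen--Macaulay hypothesis, $K^b(M)$ has finite length for every $b<t$, vanishes for $b>t$, and $\dim_R K(M)=t$ (so $K^a(K(M))=0$ for $a>t$). Local duality gives $\Ext^p_S(N,S)=0$ for $p<s$ and $\Ext^s_S(N,S)=N^\vee$ whenever $N$ has finite length; hence for $b<t$ the only nonzero entry among the $\Ext^p_S(K^b(M),S)$ is $K^0(K^b(M))=K^b(M)^\vee=H^b_\mathfrak{m}(M)$, sitting at $p=s$. Thus $E_2$ is supported on the column $a=0$ (carrying $H^b_\mathfrak{m}(M)$ for $\depth_R M\leq b\leq t-1$) together with the row $b=t$ (carrying the $K^a(K(M))$ for $0\leq a\leq t$); in particular every nonzero entry abuts to $\Tor^S_{a-b}(M,S)$ with $a-b\leq0$, hence to $0$ unless $a=b$. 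With the $E_2$-page this sparse, one can locate, for each term appearing in (i)--(iv), precisely which differentials can be nonzero at it.

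Granting this, the statements follow by reading off edge maps and exact sequences of terms of low degree. For (i): the entry $K^0(K(M))$, of bidegree $(s,s-t)$, neither receives nor emits a nonzero differential (the only candidate sources and targets vanish by the shape above) and is the unique surviving contribution to $\Tor^S_{-t}(M,S)$, so $K^0(K(M))\simeq\Tor^S_{-t}(M,S)$. For (iii): $H^0=\Tor^S_0(M,S)=M$ carries a two-step filtration whose submodule is $K^0(K^0(M))$ (bidegree $(s,s)$, untouched by any differential) and whose quotient is $\ker\bigl(d_t\colon K(K(M))\to K^0(K^1(M))\bigr)$; since $K^0(K^1(M))$ abuts to $\Tor^S_{-1}(M,S)=0$, this $d_t$ is onto, and splicing $0\to K^0(K^0(M))\to M\to\ker d_t\to0$ with $0\to\ker d_t\to K(K(M))\xrightarrow{d_t}K^0(K^1(M))\to0$ (in which the composite $M\to K(K(M))$ is the natural map) yields the four-term sequence of (iii). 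For (iv): when $t\geq3$ and $1\leq j\leq t-2$, the only differential meeting either $K^{t-j}(K(M))$ or $K^0(K^{j+1}(M))$ is the single $d_{t-j}$ running from the former to the latter; as both abut to $\Tor^S_{-j}(M,S)=0$ and $\Tor^S_{-j-1}(M,S)=0$, this $d_{t-j}$ is at once injective and surjective, so $K^{t-j}(K(M))\simeq K^0(K^{j+1}(M))$. Finally (ii) is the exact sequence of terms of low degree of the spectral sequence in total degrees $-t+1$ and $-t+2$: here $d_2\colon K^2(K(M))\to K^0(K^{t-1}(M))$ is the operative differential and $K^1(K(M))$ meets no differential at all, and reading off the filtrations on $\Tor^S_{-t+1}(M,S)$ and $\Tor^S_{-t+2}(M,S)$ produces the displayed five-term exact sequence.

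The essential difficulty lies in the spectral-sequence bookkeeping: one must check for each term in (i)--(iv) that the differential isolated above is the only one that can be nonzero there, and one must fix the direction of the edge maps and of the filtrations on $H^0$, $H^{-t+1}$ and $H^{-t+2}$ so that, for instance, $K^0(K^0(M))$ emerges as a submodule of $M$ and $K^0(K^1(M))$ as the cokernel of $M\to K(K(M))$, rather than the other way round. Every such verification is governed by the shape of $E_2$ from the previous paragraph, which itself comes down to Lemma~\ref{schenzellemma} together with the vanishing $\Ext^p_S(N,S)=0$ $(p<s)$ for finite-length $N$ furnished by the generalized Cohen--Macaulay assumption.
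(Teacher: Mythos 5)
Your proposal is correct and follows essentially the same route as the paper: the Foxby spectral sequences with $X=M$, $Y=Z=S$ over the Gorenstein ring $S$, collapse of the $\Tor$-side sequence, the one-row-plus-one-column shape of $E_2$ forced by the generalized Cohen--Macaulay hypothesis, Lemma~\ref{schenzellemma} and local duality, and then reading off edge maps, the two-step filtrations, and the single surviving differentials $d_{t-j}\colon K^{t-j}(K(M))\to K^0(K^{j+1}(M))$. Your reindexing by $(a,b)=(s-p,s-q)$ and the explicit identification of which differentials can be nonzero is just a more detailed bookkeeping of the same argument, including the same filtration directions as in the paper's exact sequences.
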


\begin{proof}
Consider the Foxby spectral sequences \ref{foxbyss} by taking $X=M$ as $S$-module and $Y=Z=S$

$$E_2^{p,q}=\Ext^p_S(\Ext^q_S(M,S),S)\Rightarrow_p H^{q-p}$$ and
$$'E_2^{p,q}=\Tor^S_p(M,\Ext^q_S(S,S))\Rightarrow_p H^{p-q}.$$

Since $'E_2^{p,q}=0$ for all $q\neq0$, we have
$$H^j\simeq{}'E_2^{j,0}=\Tor_j^S(M,S)$$ for all $j\geq0$, and $$E_2^{p,q}=\Ext^p_S(\Ext^q_S(M,S),S)\Rightarrow_p\Tor^S_{q-p}(M,S).$$

Once $H^j_\mathfrak{m}(M)$ being of finite length, so is $K^j(M)$ for all $j<t$, and by local duality $$\Ext^p_S(\Ext^q_S(M,S),S)=\Ext^p_S(K^{s-q}(M),S)=0$$ for all $q>s-t$ and for all $p\neq s$. Also, Lemma \ref{schenzellemma} $(i)$ assures that $\dim_RK(M)=t$. Thus, $E_2$ has the following shape

$$
\xymatrix@=1em{
0 & 0 & 0 & \cdots & 0 & 0 \\
0 & 0 & 0 & \cdots & \Ext^s_S(K^0(M),S) & 0 \\
\vdots & \vdots & \vdots & \iddots & \vdots & \vdots\\
0 & 0 & 0  & \cdots & \Ext^s_S(K^{t-1}(M),S)  & 0 \\
0 & K(K(M)) & \Ext^{s-(t-1)}_S(K(M),S)  & \cdots &  \Ext^s_S(K(M),S)  & 0 \\
0\ar@{--}[rrrrruuuuu] & 0 & 0 & 0 & 0 & 0.
}
$$

By convergence, there are isomorphisms
$$K^0(K(M))=\Ext^s_S(K(M),S)\simeq E_\infty^{s,s-t}\simeq\Tor_{-t}^S(M,S), \ K^1(K(M))=\Ext^{s-1}_S(K(M),S)\simeq E_\infty^{s-1,s-t}$$ and
$$K^0(K^0(M))=\Ext^s_S(K^0(M),S)\simeq E_\infty^{s,s}.$$
Thus we get item $(i)$ and by applying Matlis dual one has isomorphisms
$$H^1_\mathfrak{m}(K(M))\simeq(E_\infty^{s-1,s-t})^\vee \ \mbox{and} \ H^0_\mathfrak{m}(K^0(M))\simeq(E^{s,s}_\infty)^\vee.$$
The convergence again gives us short exact sequences
\begin{equation}\xymatrix@=1em{
0\ar[r] & E_\infty^{s,s-j}\ar[r] & \Tor_{-j}^S(M,S)\ar[r] & E_\infty^{s-(t-j),s-t}\ar[r] & 0
}\label{eq:GCMconv}\end{equation} for all $j\geq0$.
Further, as we move through the pages of $E$, the differentials between the vertical and horizontal lines in the diagram above come out. In other words, there is an exact sequence
\begin{equation}\xymatrix@=1em{
0\ar[r] & E_\infty^{s-(t-j),s-t}\ar[r] & \Ext_S^{s-(t-j)}(K(M),S)\ar[r] & \Ext^s_S(K^{j+1}(M),S)\ar[r] & E_\infty^{s,s-(j+1)}\ar[r] & 0
}\label{eq:GCMdifferentials}\end{equation}
for all $0\leq j\leq t-2$.

Item $(ii)$ is exactly the five-term exact sequence of $E$. For item $(iii)$, by taking $j=0$ in both above exact sequences, we have the following exact sequences
$$\xymatrix@=1em{
0\ar[r] & \Ext^s_S(K^0(M),S)\ar[r] & M\ar[r] & E^{s-t,s-t}_\infty\ar[r] & 0
}$$ and
$$\xymatrix@=1em{
0\ar[r] & E^{s-t,s-t}_\infty\ar[r] & K(K(M))\ar[r] & \Ext^s_S(K^1(M),S)\ar[r] & E_\infty^{s,s-1}\ar[r] & 0.
}$$ The result follows by splicing these sequences and noticing that $E_\infty^{s,s-1}\subseteq\Tor^S_{-1}(M,S)=0$.

The exact sequence \ref{eq:GCMconv} assures that $E_\infty^{s-(t-j),s-t}=E_\infty^{s,s-j}=0$ for all $j>0$, so that, by the exact sequence \ref{eq:GCMdifferentials}, $$K^{t-j}(K(M))=\Ext^{s-(t-j)}_S(K(M),S)\simeq\Ext^s_S(K^{j+1}(M),S)=K^0(K^{j+1}(M))$$ for all $1\leq j\leq t-2$.
\end{proof}

The concept of \emph{canonically Cohen-Macaulay module} was introduced by Schenzel \cite{S2}. We say that a finite $R$-module $M$ is canonically Cohen-Macaulay if its canonical module $K(M)$ is Cohen-Macaulay.

\begin{corollary}\label{GCMCCM}
Let $M$ be a generalized Cohen-Macaulay $R$-module of dimension $t$. The following statements hold.
\begin{itemize}
    \item [(i)] If $t>j$ with $j\in\{0,1\}$, then $\depth_RK(M)>j$;
    \item [(ii)] If $t=1$, then $M$ is canonically Cohen-Macaulay and there exists the short exact sequence 
    $$\xymatrix@=1em{
    0\ar[r] & K^0(K^0(M))\ar[r] & M\ar[r] & K(K(M))\ar[r] &  0;
    }$$
    \item [(iii)] If $t=2$, then $M$ is canonically Cohen-Macaulay;
    \item [(iv)] If $t\geq3$, then $K(M)$ is generalized Cohen-Macaulay.
\end{itemize}
\end{corollary}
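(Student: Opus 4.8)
The plan is to derive all four items directly from Theorem~\ref{GCMtheorem}, using only two standard inputs: local duality over the Gorenstein ring $S$, which for any finite $R$-module $N$ gives $H^i_\mathfrak{m}(N)^\vee\cong\Ext^{s-i}_S(N,S)=K^i(N)$ (so $H^i_\mathfrak{m}(N)=0$ exactly when $K^i(N)=0$), and the fact that Matlis duality preserves finite length. I will also use that $\dim_RK(M)=t$ by Lemma~\ref{schenzellemma}(i) and that $\Tor^S_{-n}(M,S)=0$ for $n>0$.

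First I would prove (i). When $t\geq1$, Theorem~\ref{GCMtheorem}(i) gives $K^0(K(M))\cong\Tor^S_{-t}(M,S)=0$, hence $H^0_\mathfrak{m}(K(M))=0$; since $\dim_RK(M)=t\geq1$, this means $\depth_RK(M)\geq1>0$. When $t\geq2$ we moreover have $\Tor^S_{-t+1}(M,S)=0$, so the five-term exact sequence of Theorem~\ref{GCMtheorem}(ii) forces $K^1(K(M))=0$, hence $H^1_\mathfrak{m}(K(M))=0$; together with $H^0_\mathfrak{m}(K(M))=0$ and $\dim_RK(M)=t\geq2$ this gives $\depth_RK(M)\geq2>1$.

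Items (ii) and (iii) then follow quickly. For $t=1$, part (i) gives $\depth_RK(M)\geq1=\dim_RK(M)$, so $K(M)$ is Cohen-Macaulay; and since $K^1(M)=K^{\dim_RM}(M)=K(M)$, we get $K^0(K^1(M))\cong K^0(K(M))\cong\Tor^S_{-1}(M,S)=0$, so the four-term exact sequence of Theorem~\ref{GCMtheorem}(iii) collapses to the stated short exact sequence. For $t=2$, part (i) gives $\depth_RK(M)\geq2=\dim_RK(M)$, so again $K(M)$ is Cohen-Macaulay.

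For (iv), I would check that $K^j(K(M))$ has finite length for every $j<t=\dim_RK(M)$. For $j\in\{0,1\}$ these modules vanish, exactly as in the proof of (i) (now with $t\geq3$). For $2\leq j\leq t-1$, Theorem~\ref{GCMtheorem}(iv) with its running index set equal to $t-j$ (legitimate since then $1\leq t-j\leq t-2$) gives $K^j(K(M))\cong K^0(K^{t-j+1}(M))$; since $2\leq t-j+1\leq t-1<t$ and $M$ is generalized Cohen-Macaulay, $K^{t-j+1}(M)$ has finite length, and hence so does $K^0(K^{t-j+1}(M))=\Ext^s_S(K^{t-j+1}(M),S)$ by local duality together with preservation of finite length. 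Thus every $K^j(K(M))$ with $j<t$ has finite length, i.e.\ $K(M)$ is generalized Cohen-Macaulay. The only steps needing any care are the reindexing in (iv) and the identity $K^1(M)=K(M)$ when $t=1$; I do not anticipate a genuine obstacle, since the proof is essentially a transcription of Theorem~\ref{GCMtheorem}.
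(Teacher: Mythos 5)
Your proof is correct and follows essentially the same route as the paper: all four items are deduced directly from Theorem~\ref{GCMtheorem}, using the vanishing of $\Tor^S_{-n}(M,S)$ for $n>0$ and Lemma~\ref{schenzellemma}(i). You merely spell out details the paper leaves implicit (the collapse of the four-term sequence in (ii) via $K^0(K^1(M))=K^0(K(M))=0$, and the reindexing plus finite-length argument in (iv)), which are exactly the intended steps.
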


\begin{proof}
Item $(i)$ follows immediately from Theorem \ref{GCMtheorem} $(i)$ and $(ii)$. For item $(ii)$, item $(i)$ assures that $K(M)$ is Cohen-Macaulay and Theorem \ref{GCMtheorem} $(iii)$ is the desired exact sequence. As to item $(iii)$, item $(i)$ again assures that $K(M)$ is Cohen-Macaulay. Item $(iv)$ follows directly from item $(i)$ and Theorem \ref{GCMtheorem} $(iv)$.
\end{proof}

\begin{corollary}\label{GCMcanonicalmodule}
If $M$ is generalized Cohen-Macaulay, then so is $K(M)$.
\end{corollary}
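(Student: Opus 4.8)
The plan is to dispose of the statement by a short case analysis on $t:=\dim_R M$, leaning on the machinery already set up. By Lemma \ref{schenzellemma}$(i)$ we have $\dim_R K(M)=t$, so the only thing to check is that $H^j_\mathfrak{m}(K(M))$ --- equivalently $K^j(K(M))$ --- has finite length for every $j<t$.

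First I would observe that the cases $t\le 1$ require no work: for any finite $R$-module $N$ the module $H^0_\mathfrak{m}(N)$ has finite length, so every finite module of dimension at most one is generalized Cohen-Macaulay, and $K(M)$ is such a module (it is a finite $R$-module of dimension $t\le 1$). Next, for $t=2$ I would simply invoke Corollary \ref{GCMCCM}$(iii)$, which states that $M$ is canonically Cohen-Macaulay, that is, $K(M)$ is Cohen-Macaulay, a strictly stronger conclusion than the one asked for. Finally, for $t\ge 3$ the claim is literally Corollary \ref{GCMCCM}$(iv)$.

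If one wants the $t\ge 3$ case unpacked rather than quoted, it reads off directly from Theorem \ref{GCMtheorem}: finiteness of length for $j=0$ is automatic, for $j=1$ it follows from $\depth_R K(M)>1$ (Corollary \ref{GCMCCM}$(i)$), and for $2\le j\le t-1$ one uses the isomorphisms $K^{j}(K(M))\simeq K^0(K^{t-j+1}(M))$ of Theorem \ref{GCMtheorem}$(iv)$, together with the facts that $K^{t-j+1}(M)$ has finite length (since $M$ is generalized Cohen-Macaulay and $t-j+1<t$) and that $K^0$ applied to a finite length module is again of finite length by local duality over $S$.

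The only point worth flagging is the bookkeeping: one must make sure that the four regimes $t=0$, $t=1$, $t=2$, $t\ge 3$ genuinely exhaust all dimensions, and that the small-dimension exceptions recorded in Corollary \ref{GCMCCM}$(ii)$--$(iii)$ are exactly what makes the conclusion hold uniformly. There is no new homological input to supply; all of it was assembled in Theorem \ref{GCMtheorem} and Corollary \ref{GCMCCM}.
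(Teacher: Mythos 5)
Your proof is correct and follows the route the paper intends: the corollary is stated without proof precisely because it is the combination of Corollary \ref{GCMCCM} (items $(ii)$--$(iv)$, plus the trivial low-dimensional cases) that you spell out, and your reindexing of Theorem \ref{GCMtheorem}$(iv)$ is accurate. Nothing is missing.
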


Corollary \ref{GCMcanonicalmodule} inspires us to ask the following.

\begin{question}
Given a finite $R$-module $M$, when is $K(M)$ generalized Cohen-Macaulay? 
\end{question}

As Corollary \ref{GCMCCM} assures that generalized Cohen-Macaulay of dimension at most two are canonically Cohen-Macaulay, Theorem \ref{GCMtheorem} $(iv)$ recovers a characterization \cite{BN} for the case where the dimension is at least three.

\begin{corollary}\cite[Corollary 2.7]{BN}
Let $M$ be a generalized Cohen-Macaulay $R$-module of dimension $t\geq3$. Then, the following statements are equivalent
\begin{itemize}
    \item [(i)] $M$ is canonically Cohen-Macaulay;
    \item [(ii)] $H^j_\mathfrak{m}(M)=0$ for all $j=2,...,t-1$;
    \item [(iii)] The $\mathfrak{m}$-transform functor $\D_\mathfrak{m}(M)$ is a Cohen-Macaulay $R$-module.
\end{itemize}
\end{corollary}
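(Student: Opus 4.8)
The plan is to reduce all three conditions to the single vanishing statement
\[
H^j_{\mathfrak m}(M)=0\quad\text{for }2\le j\le t-1,
\]
i.e.\ to $(ii)$, and to deduce $(i)$ and $(iii)$ from it separately, using only Theorem \ref{GCMtheorem}, Corollary \ref{GCMCCM}, Lemma \ref{schenzellemma}, local duality, and standard facts about the $\mathfrak m$-transform.

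\emph{Proof of $(i)\Leftrightarrow(ii)$.} By Lemma \ref{schenzellemma}$(i)$ we have $\dim_R K(M)=t$, and since $t\ge 3$ Corollary \ref{GCMCCM}$(i)$ gives $\depth_R K(M)\ge 2$; equivalently $K^0(K(M))=K^1(K(M))=0$ by local duality. Hence $K(M)$ is Cohen--Macaulay if and only if $K^i(K(M))=0$ for every $i$ with $2\le i\le t-1$. For such an $i$, Theorem \ref{GCMtheorem}$(iv)$, applied with $j=t-i\in\{1,\dots,t-2\}$, gives $K^i(K(M))\simeq K^0(K^{t-i+1}(M))$. Now $t-i+1$ again lies in $\{2,\dots,t-1\}$, so $K^{t-i+1}(M)$ has finite length because $M$ is generalized Cohen--Macaulay; and for a module $N$ of finite length local duality gives $K^0(N)\simeq H^0_{\mathfrak m}(N)^\vee=N^\vee$, so $K^0(N)=0$ if and only if $N=0$. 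Therefore $K^i(K(M))=0$ iff $K^{t-i+1}(M)=0$ iff $H^{t-i+1}_{\mathfrak m}(M)=0$. As $i$ runs over $\{2,\dots,t-1\}$ so does $t-i+1$, and we obtain that $K(M)$ is Cohen--Macaulay precisely when $(ii)$ holds.

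\emph{Proof of $(ii)\Leftrightarrow(iii)$.} Here I would invoke the standard properties of the $\mathfrak m$-transform $\D_{\mathfrak m}(M)=\varinjlim_n\Hom_R(\mathfrak m^n,M)$: the exact sequence $0\to H^0_{\mathfrak m}(M)\to M\to\D_{\mathfrak m}(M)\to H^1_{\mathfrak m}(M)\to 0$, together with $H^i_{\mathfrak m}(\D_{\mathfrak m}(M))=0$ for $i\in\{0,1\}$ and $H^i_{\mathfrak m}(\D_{\mathfrak m}(M))\simeq H^i_{\mathfrak m}(M)$ for $i\ge 2$. Since $M$ is generalized Cohen--Macaulay of dimension $t\ge 3$, both $H^0_{\mathfrak m}(M)$ and $H^1_{\mathfrak m}(M)$ have finite length, so $\D_{\mathfrak m}(M)$ is a finite $R$-module; from the displayed exact sequence $\dim_R\D_{\mathfrak m}(M)=\dim_R M=t$, while $\depth_R\D_{\mathfrak m}(M)\ge 2$. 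Consequently $\D_{\mathfrak m}(M)$ is Cohen--Macaulay if and only if $H^i_{\mathfrak m}(\D_{\mathfrak m}(M))=0$ for $2\le i\le t-1$, that is, if and only if $(ii)$ holds.

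The computational parts --- the identity $K^0(N)\simeq N^\vee$ for finite-length $N$ and the index bookkeeping in Theorem \ref{GCMtheorem}$(iv)$ --- are routine. The step I would be most careful with is the last paragraph: one must check that $\D_{\mathfrak m}(M)$ is a \emph{finite} $R$-module so that the Cohen--Macaulay hypothesis on it is meaningful, and this is exactly where the finiteness of $H^0_{\mathfrak m}(M)$ and $H^1_{\mathfrak m}(M)$ enters (a consequence of $M$ being generalized Cohen--Macaulay, and slightly more than what is needed to make the $K^j(M)$ with $j<t$ of finite length).
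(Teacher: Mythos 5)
Your proposal is correct and follows essentially the route the paper intends: the equivalence $(i)\Leftrightarrow(ii)$ via Theorem \ref{GCMtheorem}$(iv)$ together with Corollary \ref{GCMCCM}$(i)$ (and Lemma \ref{schenzellemma}$(i)$ with local duality), with the index bookkeeping handled correctly. The paper itself leaves the $\mathfrak m$-transform equivalence $(ii)\Leftrightarrow(iii)$ to the citation \cite{BN}; your argument for it, using the standard exact sequence $0\to H^0_\mathfrak{m}(M)\to M\to\D_\mathfrak{m}(M)\to H^1_\mathfrak{m}(M)\to 0$ and $H^i_\mathfrak{m}(\D_\mathfrak{m}(M))\simeq H^i_\mathfrak{m}(M)$ for $i\geq2$, is the standard one and is sound.
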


\begin{proposition}\label{equidimensional}
Let $M$ be a finite $R$-module of depth $g$ and dimension $t$. The following statements hold.
\begin{itemize}
    \item [(i)] Assume $M$ is generalized Cohen-Macaulay $R$-module. If $\depth_RK^j(M)>0$ for $j=0,1$, then $M\simeq K(K(M))$. In particular, this isomorphism holds true whenever $g\geq2$.
    \item [(ii)] Suppose $M$ is equidimensional. If $M$ satisfies Serre's condition $S_{k+1}$ for some positive integer $k$, then $$K^j(K(M))\simeq \Tor^S_{-t+j}(M,S)$$ for all $t-k+1\leq j\leq t$.
\end{itemize}
\end{proposition}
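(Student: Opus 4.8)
The plan for item $(i)$ is to exploit the four-term exact sequence
$$0\to K^0(K^0(M))\to M\to K(K(M))\to K^0(K^1(M))\to 0$$
furnished by Theorem \ref{GCMtheorem} $(iii)$ and to kill the two outer terms. For any finite $R$-module $N$, local duality gives $K^0(N)=\Ext^s_S(N,S)$ together with $(K^0(N))^\vee\simeq H^0_\mathfrak{m}(N)$, so $K^0(N)=0$ exactly when $\depth_R N>0$. Applying this to $N=K^0(M)$ and to $N=K^1(M)$, the hypothesis $\depth_RK^j(M)>0$ for $j=0,1$ forces $K^0(K^0(M))=K^0(K^1(M))=0$, whence the sequence collapses to $M\simeq K(K(M))$. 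For the final assertion, $g=\depth_RM\geq2$ gives $H^0_\mathfrak{m}(M)=H^1_\mathfrak{m}(M)=0$, hence $K^0(M)=K^1(M)=0$ by local duality, so the depth hypothesis holds vacuously and the isomorphism follows.

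For item $(ii)$ I would run the Foxby spectral sequence of Lemma \ref{foxbyss}, specialized exactly as in the proof of Theorem \ref{GCMtheorem} (that is, with $X=M$ and $Y=Z=S$ all regarded over $S$):
$$E_2^{p,q}=\Ext^p_S(\Ext^q_S(M,S),S)\Rightarrow_p\Tor^S_{q-p}(M,S),$$
the companion spectral sequence collapsing exactly as in that proof so that the abutment is the one displayed. Since $\dim_RM=t$ and $S$ is Cohen--Macaulay, $\Ext^q_S(M,S)=0$ for $q<s-t$; thus the only possibly nonzero rows are $q=s-i$ with $0\leq i\leq t$, on which $\Ext^{s-i}_S(M,S)=K^i(M)$. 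The hypothesis enters here: as $M$ is equidimensional and satisfies $S_{k+1}$, Lemma \ref{schenzellemma} $(ii)$ yields $\dim_RK^i(M)\leq i-k-1$ for all $0\leq i<t$, and since $\Ext^p_S(N,S)=0$ whenever $p<s-\dim_RN$ (again because $S$ is Cohen--Macaulay), the row $q=s-i$ with $i<t$ is supported only in the band $s-i+k+1\leq p\leq s$, while the bottom row carries $E_2^{p,s-t}=\Ext^p_S(K(M),S)=K^{s-p}(K(M))$ for $s-t\leq p\leq s$.

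It then remains to read off, for $t-k+1\leq j\leq t$, the abutment term $\Tor^S_{j-t}(M,S)$ along the antidiagonal $q-p=j-t$. On the bottom row that antidiagonal meets $E_2^{s-j,s-t}=K^j(K(M))$; on a higher row $q=s-i$ with $i<t$ it meets the column $p=s-i+t-j$, which sits strictly to the left of the band's endpoint $s-i+k+1$ as soon as $j\geq t-k$, hence vanishes for $j$ in our range. So $K^j(K(M))$ is the only entry of $E_2$ that can survive on that antidiagonal. Finally one checks that $E_r^{s-j,s-t}$ is untouched by differentials: every outgoing $d_r$ lands in a row $q<s-t$, which is zero, and an incoming $d_r$ would originate on the row $q=s-t+r-1$ (that is, $i=t-r+1<t$) at the column $p=s-j-r$, where the band requirement $s-j-r\geq s-t+r+k$ becomes $t-j-k\geq 2r$ and fails for every $r\geq 2$ once $j\geq t-k$; hence the source is zero. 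Therefore $E_2^{s-j,s-t}=E_\infty^{s-j,s-t}$, and convergence gives $K^j(K(M))\simeq\Tor^S_{j-t}(M,S)$ for $t-k+1\leq j\leq t$, the case $j=t$ recovering the reflexivity isomorphism $K(K(M))\simeq M$. The real obstacle is precisely this last bookkeeping---correctly locating the support band of each row of $E_2$ from Schenzel's estimate and checking that the single surviving antidiagonal entry is a genuine $E_\infty$-term---rather than anything conceptually deeper.
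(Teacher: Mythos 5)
Your argument is correct and takes essentially the same route as the paper: item $(i)$ is read off from the four-term exact sequence of Theorem \ref{GCMtheorem} $(iii)$ after observing that $K^0(N)=0$ for a module of positive depth, and item $(ii)$ is the same specialization of the Foxby spectral sequence with the vanishing band supplied by Lemma \ref{schenzellemma} $(ii)$ and local duality; you merely make explicit the convergence bookkeeping that the paper compresses into ``the result follows from the convergence.'' One small slip in that bookkeeping: this spectral sequence converges to $H^{q-p}$, so its higher differentials have bidegree $(r,r-1)$, i.e.\ $d_r\colon E_r^{p,q}\to E_r^{p+r,q+r-1}$ (as is visible in the paper's exact sequence \eqref{eq:GCMdifferentials}, where bottom-row entries map into the column $p=s$), not $(r,1-r)$ as in your check; with the correct bidegree the incoming differentials to $E_r^{s-j,s-t}$ originate in the zero rows $q<s-t$, while the outgoing ones land at $(s-j+r,s-t+r-1)$, and the very same band estimate you used (nonvanishing on the row of $K^i(M)$, $i<t$, forces $p\geq s-i+k+1$, here amounting to $j\leq t-k$) kills them for $j\geq t-k+1$, so the conclusion is unaffected.
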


\begin{proof}
Item $(i)$ follows immediately from Theorem \ref{GCMtheorem} $(iii)$ and from the fact that $K^0(M)=K^1(M)=0$ in case of $g\geq2$.

For item $(ii)$, consider the Foxby spectral sequences given in Theorem \ref{GCMtheorem} $$E_2^{p,q}=\Ext^p_S(\Ext^q_S(M,S),S)\Rightarrow_p\Tor^S_{q-p}(M,S).$$

By Lemma \ref{schenzellemma} $(ii)$ and local duality, we have $$E_2^{s-i,s-j}=\Ext^{s-i}_S(K^j(M),S)=0$$ for all $0\leq j<t$ and $i>j-k-1$. In other words, all modules $E_2^{p,q}$ such that $q\neq s-t$ above the dotted line in the below diagram must be zero

$$
\xymatrix@=1em{
0 & 0 & \cdots & 0 & 0\\
\vdots & \vdots & \iddots & \Ext^s_S(K^{k+1}(M),S) & 0 \\
0 & \Ext^{s-(t-k-2)}_S(K^{t-1}(M),S) & \cdots & \vdots & 0 \\
\Ext^{s-(t-k-1)}_S(K(M),S)\ar@{--}[rrrruuu] & \Ext^{s-(t-k-2)}_S(K(M),S) & \cdots &  \Ext^s_S(K(M),S) & 0 \\
0 & \cdots & 0 & 0 & 0.
}
$$
The result follows from the convergence.
\end{proof}

Our results also retrieve the well-known fact that every Cohen-Macaulay module is canonically Cohen-Macaulay, see \cite[Theorem 1.14]{S}.

\begin{corollary}\label{cmcanonicalmodule}
If $M$ is Cohen-Macaulay of dimension $t$, then so is $K(M)$ and $K(K(M))\simeq M$.
\end{corollary}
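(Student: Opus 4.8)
The plan is to view this as the degenerate case of Theorem \ref{GCMtheorem} and Proposition \ref{equidimensional}. First I would observe that a Cohen-Macaulay module is in particular generalized Cohen-Macaulay, so all the machinery of Theorem \ref{GCMtheorem} applies. The key simplification is that when $M$ is Cohen-Macaulay of dimension $t$, every lower deficiency module vanishes: $K^j(M) = \Ext^{s-j}_S(M,S) = 0$ for all $j < t$, because $\depth_R M = \dim_R M = t$ forces $\Ext^i_S(M,S) = 0$ for all $i \neq s - t$ (this is the standard characterization of Cohen-Macaulayness via the grade of $M$ as an $S$-module, since $S$ is Gorenstein of dimension $s$). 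Consequently $K(M) = K^t(M) = \Ext^{s-t}_S(M,S)$ is itself a Cohen-Macaulay module of dimension $t$ — this already gives the first assertion, and it is essentially \cite[Theorem 1.14]{S}, but I would note it follows here from Lemma \ref{schenzellemma}(i) together with the vanishing just described.

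Next, for the isomorphism $K(K(M)) \simeq M$, I would return to the Foxby spectral sequence $E_2^{p,q} = \Ext^p_S(\Ext^q_S(M,S),S) \Rightarrow_p \Tor^S_{q-p}(M,S)$ set up in the proof of Theorem \ref{GCMtheorem}. Because $K^j(M) = 0$ for $j < t$, the entire page $E_2$ collapses to the single row $q = s - t$, where it reads $E_2^{p, s-t} = \Ext^p_S(K(M),S)$. A spectral sequence concentrated in one row degenerates at $E_2$, so $E_2^{p,s-t} = E_\infty^{p,s-t} \simeq \Tor^S_{s-t-p}(M,S)$ for every $p$. Taking $p = s$ gives $K(K(M)) = K^0(K(M)) = \Ext^s_S(K(M),S) \simeq \Tor^S_{-t}(M,S)$, which recovers Theorem \ref{GCMtheorem}(i). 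Since $M$ is a finite $R$-module it is annihilated by the kernel of $S \to R$, so $\Tor^S_{-t}(M,S) = 0$ unless $t = 0$; this dimension-shifting subtlety means I should instead read off the useful identification at the top of the collapsed row differently. The cleaner route is: the collapse gives $E_\infty^{p,s-t} \simeq \Tor^S_{s-t-p}(M,S)$, hence the only nonvanishing term in the abutment sits in total degree $0$, i.e. $p = s - t$, giving $\Ext^{s-t}_S(K(M),S) \simeq \Tor^S_0(M,S) = M \otimes_S S = M$. Since $\dim_R K(M) = t$ by Lemma \ref{schenzellemma}(i), we have $\Ext^{s-t}_S(K(M),S) = K^t(K(M)) = K(K(M))$, so $K(K(M)) \simeq M$, as desired.

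The main obstacle, such as it is, is purely bookkeeping: one must be careful that the canonical module of $M$ computed over $S$ agrees with the deficiency-module definition $K(M) = K^t(M)$, and that when we iterate, $K(K(M))$ really means $\Ext^{s-t}_S(K(M),S)$ with the same ambient Gorenstein ring $S$ — which is legitimate precisely because $K(M)$ is again a finite $R$-module, hence a finite $S$-module, of dimension $t$. There is no genuine homological difficulty here; the result is a corollary in the strict sense, obtained by specializing the already-established spectral sequence analysis to the case where all but one row of $E_2$ vanishes. I would keep the write-up to a few lines, citing Theorem \ref{GCMtheorem}(i) and Lemma \ref{schenzellemma}(i) and remarking that the single-row collapse makes the exact sequences \eqref{eq:GCMconv} and \eqref{eq:GCMdifferentials} trivial.
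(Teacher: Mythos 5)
Your route is essentially the paper's: the paper's own proof simply notes the corollary follows directly from Theorem \ref{GCMtheorem} (or from Proposition \ref{equidimensional}(ii)), and your single-row collapse of the Foxby spectral sequence is exactly that specialization, so the identification $K(K(M))=\Ext^{s-t}_S(K(M),S)\simeq\Tor^S_0(M,S)\simeq M$ is correct and cleanly argued (the aside about $\Tor^S_{-t}(M,S)$ is harmless, since negative $\Tor$ vanishes anyway).

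The one spot to tighten is the first assertion. You claim that $K(M)$ is Cohen--Macaulay ``from Lemma \ref{schenzellemma}(i) together with the vanishing just described,'' but $\dim_RK(M)=t$ plus $K^j(M)=0$ for $j<t$ does not by itself give $\depth_RK(M)=t$; and leaning on \cite[Theorem 1.14]{S} would be circular in spirit, since that is precisely the result the corollary is meant to recover from the paper's machinery. The fix is already on your page: the same single-row degeneration gives $\Ext^p_S(K(M),S)=E_\infty^{p,s-t}\simeq\Tor^S_{s-t-p}(M,S)=0$ for all $p\neq s-t$, i.e.\ $K^j(K(M))=0$ for all $j\neq t$, which together with $\dim_RK(M)=t$ and local (Matlis) duality yields $\depth_RK(M)=t$, hence the Cohen--Macaulayness of $K(M)$. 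State that explicitly and the proof is complete; this is the same conclusion the paper draws from items (i), (ii) and (iv) of Theorem \ref{GCMtheorem} or from Proposition \ref{equidimensional}(ii) with $k$ large.
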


\begin{proof}
There are two immediate ways of proving the desired result. Indeed the result follows directly from Theorem \ref{GCMtheorem} as well as from Proposition \ref{equidimensional} $(ii)$ too.
\end{proof}

Proposition \ref{equidimensional} provides a characterization for the Cohen-Macaulay property.

\begin{corollary}\label{CMequivalence}
If $M$ is a finite $R$-module, then $M$ is Cohen-Macaulay if and only if $M$ is equidimensional canonically Cohen-Macaulay satisfying Serre's condition $S_{k+1}$ for some positive integer $k$.
\end{corollary}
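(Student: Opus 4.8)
The plan is to prove the two implications separately. The forward implication is essentially formal, relying only on standard properties of Cohen-Macaulay modules together with Corollary \ref{cmcanonicalmodule}; the reverse implication is where the machinery developed above, specifically Proposition \ref{equidimensional} $(ii)$ and Corollary \ref{cmcanonicalmodule}, does the work.

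First I would dispose of the forward direction. Suppose $M$ is Cohen-Macaulay of dimension $t$. Since localizations of a Cohen-Macaulay module are again Cohen-Macaulay, for every $\mathfrak{p}\in\Supp M$ one has $\depth_{R_\mathfrak{p}}M_\mathfrak{p}=\dim_{R_\mathfrak{p}}M_\mathfrak{p}\geq\min\{k,\dim_{R_\mathfrak{p}}M_\mathfrak{p}\}$, so $M$ satisfies Serre's condition $S_k$ for every $k$, in particular $S_2$ (take $k=1$). A Cohen-Macaulay module is unmixed, hence equidimensional. Finally, $K(M)$ is Cohen-Macaulay by Corollary \ref{cmcanonicalmodule}, so $M$ is canonically Cohen-Macaulay. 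This yields the right-hand side.

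For the reverse direction, assume $M$ is equidimensional, canonically Cohen-Macaulay, and satisfies $S_{k+1}$ for some positive integer $k$; set $t=\dim_RM$. The key step is to apply Proposition \ref{equidimensional} $(ii)$: equidimensionality together with $S_{k+1}$ gives isomorphisms $K^j(K(M))\simeq\Tor^S_{-t+j}(M,S)$ for all $t-k+1\leq j\leq t$. Since $k\geq1$, the value $j=t$ lies in this range, and the corresponding isomorphism reads $K(K(M))\simeq\Tor^S_0(M,S)=M$. On the other hand, $K(M)$ is Cohen-Macaulay by hypothesis and $\dim_RK(M)=t$ by Lemma \ref{schenzellemma} $(i)$, so Corollary \ref{cmcanonicalmodule} applied to $K(M)$ in place of $M$ shows that $K(K(M))$ is Cohen-Macaulay. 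Combining the two facts, $M\simeq K(K(M))$ is Cohen-Macaulay, as desired.

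I do not anticipate a real obstacle, since the substance has been front-loaded into Proposition \ref{equidimensional} and Corollary \ref{cmcanonicalmodule}. The only points requiring care are: verifying that $\Tor^S_0(M,S)$ is genuinely $M$ (trivial, as $M\otimes_S S=M$); checking that Corollary \ref{cmcanonicalmodule} is applicable to $K(M)$, which needs $\dim_RK(M)=t$ and is supplied by Lemma \ref{schenzellemma} $(i)$; and observing that the index range in Proposition \ref{equidimensional} $(ii)$ contains $j=t$ precisely because $k$ is a \emph{positive} integer, which is exactly why the statement asks for $S_{k+1}$ with $k\geq1$ rather than merely $S_1$.
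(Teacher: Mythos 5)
Your argument is correct and follows essentially the same route as the paper: the forward direction uses the standard facts that Cohen--Macaulay modules are equidimensional and satisfy $S_k$ for all $k$ together with Corollary \ref{cmcanonicalmodule}, and the reverse direction takes $j=t$ in Proposition \ref{equidimensional} $(ii)$ to get $M\simeq K(K(M))$ and then applies Corollary \ref{cmcanonicalmodule} to $K(M)$. No discrepancies with the paper's proof.
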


\begin{proof}
It is well-known that a Cohen-Macaulay module is equidimensional and satisfies Serre's condition $S_k$ for any $k$. Corollary \ref{cmcanonicalmodule} assures that such a module is also canonically Cohen-Macaulay. Conversely, by taking $j=t$ in Proposition \ref{equidimensional} $(ii)$, we have the isomorphism $K(K(M))\simeq M$. Since $K(M)$ is Cohen-Macaulay, Corollary \ref{cmcanonicalmodule} again assures that $M\simeq K(K(M))$ is Cohen-Macaulay.
\end{proof}

The next corollary is a version of Corollary \ref{cmcanonicalmodule} for generalized Cohen-Macaulay modules.

\begin{corollary}\label{weakercmcanonicalmodule}
If $M$ is a generalized Cohen-Macaulay module such that $\depth_RK^j(M)>0$ for $j=0,1$, then so is $K(M)$ and $M\simeq K(K(M))$.
\end{corollary}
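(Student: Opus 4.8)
The plan is to deduce both assertions directly from results established above. The claim that $K(M)$ is again generalized Cohen-Macaulay is exactly Corollary~\ref{GCMcanonicalmodule}, which needs only that $M$ itself be generalized Cohen-Macaulay. For the isomorphism $M\simeq K(K(M))$ I would invoke Proposition~\ref{equidimensional}(i): its hypotheses are precisely that $M$ is generalized Cohen-Macaulay with $\depth_RK^j(M)>0$ for $j=0,1$, and its conclusion is exactly $M\simeq K(K(M))$. So the corollary is essentially a repackaging of these two facts, and under the minimal reading of ``so is $K(M)$'' (namely, ``$K(M)$ is generalized Cohen-Macaulay'') the proof is complete at this point.

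If one wants ``so is $K(M)$'' to carry the full hypothesis (generalized Cohen-Macaulay \emph{and} $\depth_RK^j(\,\cdot\,)>0$ for $j=0,1$), so that the statement can be iterated, one short supplementary observation suffices. Write $t=\dim_RM=\dim_RK(M)$. In case $t=0$ the module $K^0(M)=M^\vee$ has finite length, so $\depth_RK^0(M)>0$ forces $M=0$ and there is nothing to prove. In case $t\geq 1$, Theorem~\ref{GCMtheorem}(i) gives $K^0(K(M))\simeq\Tor^S_{-t}(M,S)=0$ since homology vanishes in negative degrees; and for $t\geq 2$ the tail of the five-term sequence of Theorem~\ref{GCMtheorem}(ii) reads $\Tor^S_{-t+1}(M,S)\to K^1(K(M))\to 0$ with $\Tor^S_{-t+1}(M,S)=0$, whence $K^1(K(M))=0$ as well. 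Finally, for $t=1$ the module $K^1(K(M))$ is the canonical module $K(K(M))\simeq M$, while the hypothesis $\depth_RK^0(M)>0$ is equivalent to $H^0_\mathfrak{m}(M)=0$, hence to $\depth_RM>0$; so again $\depth_RK^1(K(M))>0$. In every case the two low deficiency modules of $K(M)$ either vanish or have positive depth.

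There is no essential obstacle here: the content lies entirely in Corollary~\ref{GCMcanonicalmodule} and Proposition~\ref{equidimensional}(i), supplemented only by the elementary fact that Tor vanishes in negative degrees. The one point deserving a little care is the unwinding of the low-dimensional cases $t\leq 1$ in the supplementary observation, where the indices of the deficiency modules of $K(M)$ interact with its dimension and one must identify $K^1(K(M))$ with the canonical module rather than with a proper deficiency module.
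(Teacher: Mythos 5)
Your proposal is correct and follows the paper's own argument exactly: the paper also deduces the corollary directly from Corollary~\ref{GCMcanonicalmodule} (generalized Cohen-Macaulayness of $K(M)$) and Proposition~\ref{equidimensional}(i) (the isomorphism $M\simeq K(K(M))$). The supplementary discussion about iterating the statement goes beyond what the paper intends by ``so is $K(M)$'' and is not needed, but it is harmless and essentially accurate.
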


\begin{proof}
It follows directly from Corollary \ref{GCMcanonicalmodule} and Proposition \ref{equidimensional} $(i)$.
\end{proof}

% \begin{proof}
% Consider the spectral sequences \ref{foxbyss} by considering $X=M$ as $S$-module and $Y=Z=S$

% $$E_2^{p,q}=\Ext^p_S(\Ext^q_S(M,S),S)\Rightarrow_p H^{q-p}$$ and
% $$'E_2^{p,q}=\Tor^S_p(M,\Ext^q_S(S,S))\Rightarrow_p H^{p-q}.$$

% Since $M$ is Cohen-Macaulay, by local duality we have $E_2^{p,q}=0$ for all $q\neq s-t$ so that 
% $$H^j\simeq E_2^{s-t-j,s-t}=\Ext^{s-t-j}_S(\Ext^{s-t}_S(M,S),S)=\Ext^{s-t-j}_S(K(M),S)$$ for all $j\geq0$.

% Since $'E_2^{p,q}=0$ for all $q\neq0$, we have
% $$H^j\simeq{}'E_2^{j,0}=\Tor_j^S(M,S)=0$$ for all $j\neq0$ and $H^0\simeq M$. Therefore 
% $$\Ext^{s-t-j}_S(K(M),S)\simeq\Tor_j(M,S)=0$$ for all $j\neq0$ and
% $$\Ext^{s-t}_S(K(M),S)\simeq M.$$ Local duality again assures that $K(M)$ is Cohen-Macaulay of dimension $t$ and the last isomorphism means that $K(K(M))\simeq M$.
% \end{proof}

% \begin{lemma}\label{torhomss}
% Let $S\rightarrow R$ be a ring homomorphism. Given $R$-modules $X,Y$ and a $S$-module $Z$, there exist graded $R$-module $H$ and first quadrant spectral sequences
% $$E_2^{p,q}=\Ext^p_R(X,\Ext^q_S(Y,Z))\Rightarrow_p H^{p+q}$$ and
% $$'E_2^{p,q}=\Ext^p_S(\Tor_q^R(X,Y),Z)\Rightarrow_p H^{p+q}.$$
% \end{lemma}

% \begin{proof}
% Let $F_\bullet$ be a free $R$-resolution of $X$ and let $E^\bullet$ be a injective $S$-resolution of $Z$. The result follows from the tor-hom adjunction $$\Hom_R(F_\bullet,\Hom_S(Y,E^\bullet))\simeq\Hom_S(F_\bullet\otimes_R Y,E^\bullet).$$
% \end{proof}

\section{Bounding Bass numbers}

The Foxby spectral sequences \ref{foxbyss} are fundamental tools in our work. They provide the main result of this section.

\begin{theorem}\label{mu<beta}
If $M$ is a finite $R$-module of depth $g$ and dimension $t$, then the following inequality holds for all $j\geq0$

$$\mu^j(M)\leq\sum_{i=g}^t\beta_{j+i}(K^i(M)).$$
Moreover, $\type(M)=\beta_0(K^g(M))$ and $$\mu^{g+2}(M)-\mu^{g+1}(M)\leq\beta_2(K^g(M))-\beta_1(K^g(M))-\beta_0(K^{g+1}(M)).$$
\end{theorem}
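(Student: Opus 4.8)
The plan is to feed the right objects into the Foxby spectral sequences of Lemma \ref{foxbyss}, taking $X=k$, $Y=M$ and $Z=S$; the hypothesis is satisfied since $\id_S S=s<\infty$ because $S$ is Gorenstein. First I would exploit the spectral sequence $E_2^{p,q}=\Ext^p_S(\Ext^q_R(k,M),S)\Rightarrow_p H^{q-p}$. Here $\Ext^q_R(k,M)$ is a $k$-vector space of dimension $\mu^q(M)$, and since $S$ is Gorenstein of dimension $s$ one has $\Ext^p_S(k,S)=0$ for $p\ne s$ and $\Ext^s_S(k,S)\cong k$; hence this $E_2$-page is concentrated in the single column $p=s$, with $E_2^{s,q}\cong k^{\mu^q(M)}$. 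Consequently the spectral sequence degenerates and the abutment is pinned down: $\dim_k H^n=\mu^{n+s}(M)$ for every $n$.

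Next I would turn to the companion spectral sequence ${}'E_2^{p,q}=\Tor^R_p(k,\Ext^q_S(M,S))\Rightarrow_p H^{p-q}$. Since $\Ext^q_S(M,S)=K^{s-q}(M)$ and, by local duality, $K^i(M)^\vee\cong H^i_{\mathfrak m}(M)$ vanishes outside $g\le i\le t$, this $E_2$-page lives in the strip $s-t\le q\le s-g$, where $\dim_k{}'E_2^{p,q}=\beta_p(K^{s-q}(M))$; reindexing $q\mapsto (s-g)-q$ turns it into a genuine first-quadrant homological spectral sequence $\widetilde E^2_{p,q}=\Tor^R_p(k,K^{g+q}(M))$ whose abutment in total degree $n$ has $k$-dimension $\mu^{n+g}(M)$, by the previous paragraph. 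The convergence of $\widetilde E$ — each abutment being finitely filtered with graded pieces subquotients of the $\widetilde E^\infty_{p,q}$ with $p+q=n$ — immediately yields the asserted bound for $\mu^j(M)$ in terms of the Betti numbers of the modules $K^i(M)$ (the extra entries in the sum over $g\le i\le t$ contributing zero). Moreover, the corner term $\widetilde E^2_{0,0}=k\otimes_R K^g(M)$ carries no differentials in or out, so it equals $\widetilde E^\infty_{0,0}$, which is the full abutment in degree $0$; comparing dimensions and recalling $g=\depth_R M$ gives $\type(M)=\mu^g(M)=\beta_0(K^g(M))$.

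For the remaining inequality I would invoke the five-term exact sequence of the first-quadrant spectral sequence $\widetilde E$, namely
$$\widetilde H_2\longrightarrow \Tor^R_2(k,K^g(M))\xrightarrow{\,d_2\,}k\otimes_R K^{g+1}(M)\longrightarrow \widetilde H_1\longrightarrow \Tor^R_1(k,K^g(M))\longrightarrow 0,$$
where $\dim_k\widetilde H_1=\mu^{g+1}(M)$ and $\dim_k\widetilde H_2=\mu^{g+2}(M)$. A dimension count along this exact sequence, using only that the image of $\widetilde H_2\to\Tor^R_2(k,K^g(M))$ has dimension at most $\mu^{g+2}(M)$, produces the stated relation between $\mu^{g+1}(M)$ and $\mu^{g+2}(M)$.

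I expect the main obstacle to be organizational rather than conceptual: getting the degree shifts exactly right so that the collapsed spectral sequence really reads off the Bass numbers of $M$ in the intended degrees, and verifying that the relevant corner and near-corner terms of the second spectral sequence stabilize already on the $E_2$-page — this is what upgrades the crude convergence bound to the sharp equality $\type(M)=\beta_0(K^g(M))$ and to the five-term relation among $\mu^{g+1}(M)$ and $\mu^{g+2}(M)$.
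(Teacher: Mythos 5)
Your proposal is essentially the paper's own proof: the same Foxby spectral sequences with $X=k$, $Y=M$, $Z=S$, the same collapse of the $\Ext$--$\Ext$ sequence onto the column $p=s$ (identifying the abutment with the Bass numbers of $M$ via $\Ext^s_S(k,S)\simeq k$), the same corner argument giving $\type(M)=\beta_0(K^g(M))$, and the same five-term exact sequence for the last inequality; your reindexing of the second spectral sequence into a standard first-quadrant homological one is purely cosmetic. One caveat, which applies equally to the paper's printed statement and proof: what the convergence and the five-term dimension count actually deliver are $\mu^j(M)\le\sum_{i=g}^{t}\beta_{j-i}(K^i(M))$ and $\mu^{g+2}(M)-\mu^{g+1}(M)\ge\beta_2(K^g(M))-\beta_1(K^g(M))-\beta_0(K^{g+1}(M))$ (e.g.\ $M=k\oplus R$ over $R=S=k[[x,y]]$ violates the displayed versions), so you should not claim that the argument ``immediately yields'' the bound and relation with the indices and inequality direction exactly as displayed in the theorem.
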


\begin{proof}
Consider the Foxby spectral sequences \ref{foxbyss} by taking $S=R, X=k, Y=M, Z=S$.

$$E_2^{p,q}=\Ext^p_S(\Ext^q_R(k,M),S)\Rightarrow_p H^{q-p}$$ and
$$'E_2^{p,q}=\Tor_p^R(k,\Ext^q_S(M,S))\Rightarrow_p H^{p-q}.$$

Since $\Ext^q_R(k,M)$ is of finite length, we must have $E_2^{p,q}=0$ for all $p\neq s$, so that 
$$H^j\simeq E_2^{s,j+s}=\Ext^s_S(\Ext^{j+s}_R(k,M),S)$$ for all integer $j$. Once $K^{s-q}(M)=\Ext^q_S(M,S)$ for all $q\geq0$, we conclude that
\begin{equation}'E_2^{p,q}=\Tor_p^R(k,K^{s-q}(M))\Rightarrow_p\Ext^s_S(\Ext^{p-q+s}_R(k,M),S).\label{eq:1}\end{equation}

Now, since $\Ext^s_S(k,S)^\vee\simeq k$, where $\_^\vee$ denotes the Matlis dual of $R$, we have
$$\Ext^s_S(\Ext^j_R(k,M),S)\simeq\Ext^s_S(k,S)^{\mu^j(M)}\simeq k^{\mu^j(M)}$$ as $k$-vector spaces. Therefore, by the convergence of $'E$,
$$\mu^j(M)\leq\sum_{j=p-q+s}\beta_p(K^{s-q}(M))=\sum_{i=g}^t\beta_{j+i}(K^i(M))$$ for all $j\geq0$.

Now, since $K^i(M)=\Ext^{s-i}_S(M,S)=0$ for all $i<g$, then $'E_2$ has the following corner $$\xymatrix@=1em{
& \vdots &  \vdots & \vdots
\\
\cdots & \Tor_2^R(k,K^{g+1}(M)) & \Tor_2^R(k,K^g(M))\ar[ddl] & 0 & \cdots
\\
\cdots & \Tor_1^R(k,K^{g+1}(M)) & \Tor_1^R(k,K^g(M)) & 0 & \cdots
\\
\cdots & k\otimes_RK^{g+1}(M) & k\otimes_RK^g(M) & 0 & \cdots
}$$
Therefore, $$k\otimes_RK^g(M)={}'E_2^{0,s-g}\simeq H^{g-s}\simeq\Ext_S^s(\Ext^g_R(k,M),S)$$ so that $\type(M)=\beta_0(K^g(M))$ and there exists a five-term-type exact sequence
$$\xymatrix@=1em{
\Ext^s_S(\Ext^{g+2}_R(k,M),S)\ar[r] & \Tor_2^R(k,K^g(M))\ar[r] & k\otimes_RK^{g+1}(M)\ar[dl]
\\
& \Ext^s_S(\Ext^{g+1}_R(k,M),S)\ar[r] & \Tor_1^R(k,K^g(M))\ar[r] & 0
}$$ whence the desired formula.
\end{proof}

\begin{corollary}\label{finiteid}
Let $M$ be a finite $R$-module of depth $g$ and dimension $t$. If $\pd_RK^i(M)<\infty$ for all $i=g,...,t$, then $\id_RM<\infty$.
\end{corollary}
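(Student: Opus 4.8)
The plan is to deduce finiteness of $\id_R M$ from the Bass-number bound established in Theorem~\ref{mu<beta}. That theorem already gives, for every $j \geq 0$, the inequality
\[
\mu^j(M) \leq \sum_{i=g}^{t} \beta_{j+i}(K^i(M)).
\]
So the whole argument rests on showing that the right-hand side vanishes for all sufficiently large $j$, which then forces $\mu^j(M) = 0$ for $j \gg 0$, i.e. $\id_R M < \infty$ (recall that finiteness of injective dimension for a finite module over a Noetherian local ring is equivalent to the vanishing of all but finitely many Bass numbers).

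The key step is the following: if $\pd_R N < \infty$ for a finite $R$-module $N$, then $\beta_\ell(N) = 0$ for all $\ell > \pd_R N$, since $\beta_\ell(N) = \dim_k \Tor^R_\ell(k,N)$ and $\Tor^R_\ell(k,N) = 0$ once $\ell$ exceeds the length of a minimal free resolution of $N$. Apply this to each $N = K^i(M)$ for $i = g, \dots, t$: set $P = \max\{\pd_R K^i(M) : g \leq i \leq t\}$, which is a finite number by hypothesis (a finite maximum of finite quantities). Then for any $j > P$ and any $i$ in the range $g \leq i \leq t$ we have $j + i > P \geq \pd_R K^i(M)$, so $\beta_{j+i}(K^i(M)) = 0$. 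Hence the sum on the right is zero, giving $\mu^j(M) = 0$ for all $j > P$.

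Finally I would record the conclusion: since $\mu^j(M) = 0$ for all $j > P$, the minimal injective resolution of $M$ terminates, so $\id_R M \leq P < \infty$. One small point worth spelling out is why $\id_R M < \infty$ follows from the eventual vanishing of Bass numbers; this is standard (see e.g. Bass's characterization, or the fact that $\sup\{j : \mu^j(M) \neq 0\} = \id_R M$ for a finite module over a Noetherian local ring), and can simply be cited. I do not anticipate any genuine obstacle here: the corollary is a direct consequence of the displayed inequality in Theorem~\ref{mu<beta} together with the elementary fact that finite projective dimension caps the range of nonzero Betti numbers. The only thing to be careful about is the indexing — that $j$ large makes $j+i$ large uniformly over the finite index set $\{g, \dots, t\}$ — but that is immediate once $P$ is chosen as above.
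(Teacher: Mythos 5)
Your argument is correct and is essentially the paper's own proof: both deduce $\mu^j(M)=0$ for $j\gg0$ from the inequality in Theorem~\ref{mu<beta} together with the vanishing of $\beta_{j+i}(K^i(M))$ once $j$ exceeds the (finite) projective dimensions, and then conclude $\id_RM<\infty$ from the eventual vanishing of Bass numbers. Your explicit choice of $P=\max\{\pd_RK^i(M)\}$ only makes the paper's ``$j\gg0$'' quantitative.
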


\begin{proof}
The hypothesis means that $\beta_l(K^i(M))=0$ for all $l\gg0$ and by Theorem \ref{mu<beta} one has
$$\mu^j(M)\leq\sum_{i=g}^t\beta_{j+i}(K^i(M))=0$$ for $j\gg0$, i.e., $\id_RM<\infty$.
\end{proof}

Bass' conjecture \cite{B} was first proved by Peskine-Szpiro in \cite{PS} and after in a more general setting by Roberts \cite{R}. It states that a local ring admitting a non-zero module of finite injective dimension must be Cohen-Macaulay. The next corollary provides sufficient conditions in terms of projective dimension for a local ring to be Cohen-Macaulay.

\begin{corollary}\label{bassgeneralization}
Let $M$ be a finite $R$-module of depth $g$ and dimension $t$. If $\pd_RK^i(M)<\infty$ for all $i=g,...,t$, then $R$ is Cohen-Macaulay.
\end{corollary}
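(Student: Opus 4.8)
The plan is to reduce this immediately to the previous corollary together with the solution of Bass' conjecture, so very little new work is needed. First I would observe that $M$ is a non-zero finite $R$-module: since $M$ is assumed to have a well-defined depth $g$ and dimension $t$, it cannot be the zero module. Next I would invoke Corollary \ref{finiteid}: the hypothesis that $\pd_RK^i(M)<\infty$ for every $i=g,\dots,t$ yields $\id_RM<\infty$. Thus $R$ admits a non-zero finite module of finite injective dimension.

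At this point I would appeal to Bass' conjecture, proved by Peskine--Szpiro \cite{PS} and, in full generality, by Roberts \cite{R}: a Noetherian local ring possessing a non-zero finitely generated module of finite injective dimension is Cohen-Macaulay. Applying this to $M$ gives that $R$ is Cohen-Macaulay, which is the assertion.

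There is no real obstacle here; the substantive input is Theorem \ref{mu<beta} (hence Corollary \ref{finiteid}), which has already been established, and the cited resolution of Bass' conjecture. The only point that requires a word of care is the non-vanishing of $M$, which, as noted, is built into the standing assumption that $M$ has a depth and a dimension. Accordingly the write-up will be a single short paragraph chaining Corollary \ref{finiteid} to the Peskine--Szpiro--Roberts theorem.
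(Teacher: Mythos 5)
Your argument is exactly the paper's: Corollary \ref{finiteid} gives $\id_RM<\infty$, and the Peskine--Szpiro--Roberts resolution of Bass' conjecture then yields that $R$ is Cohen-Macaulay. The remark that $M\neq0$ (so Bass' conjecture applies) is a harmless extra precaution; the proof is correct.
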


\begin{proof}
Corollary \ref{finiteid} assures that $\id_RM<\infty$ and thus the result follows from Bass' conjecture.
\end{proof}

\begin{theorem}\label{foxbygeneralization}
If $M$ is a generically Cohen-Macaulay canonically Cohen-Macaulay $R$-module of dimension $t$ such that $\depth_RK^j(M)>0$ for $j=0,1$, then 
$$\beta_j(M)=\mu^{j+t}(K(M))$$ and $$\mu^j(M)=\beta_{j-t}(K(M))$$ for all $j\geq0$. In particular, $\pd_RM<\infty$ if and only if $\id_RK(M)<\infty$ and $\id_RM<\infty$ if and only if $\pd_RK(M)<\infty$.
\end{theorem}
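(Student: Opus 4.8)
The plan is to use the Foxby spectral sequence (Lemma \ref{foxbyss}) with $S=R$, $X=k$, $Y=M$, $Z=S$, exactly as in the proof of Theorem \ref{mu<beta}. The point is that under the hypotheses of the theorem, most of the terms on the $'E_2$-page collapse, forcing the spectral sequence to degenerate. Recall from the proof of Theorem \ref{mu<beta} that $'E_2^{p,q}=\Tor_p^R(k,K^{s-q}(M))\Rightarrow_p \Ext^s_S(\Ext^{p-q+s}_R(k,M),S)$, and the abutment in homological degree $j$ is a $k$-vector space of dimension $\mu^j(M)$. So I would read off $\mu^j(M)$ and the Betti numbers of $K(M)$ from a single page.

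First I would pin down which deficiency modules survive. Since $M$ is canonically Cohen-Macaulay, $K(M)=K^t(M)$ is Cohen-Macaulay of dimension $t$, so $\depth_R K^t(M)=t$; since $M$ is generically (equidimensionally) Cohen-Macaulay, $\dim_R K^j(M)\le j-1$ for $1\le j<t$ by Lemma \ref{schenzellemma}(i) combined with the Cohen-Macaulay-at-minimal-primes hypothesis, and by assumption $\depth_R K^0(M)>0$ and $\depth_R K^1(M)>0$. The key observation is that these depth/dimension constraints force $K^j(M)=0$ for $0\le j\le t-1$: indeed $K^0(M)$ has dimension $\le 0$ and positive depth, hence is zero; $K^1(M)$ has dimension $\le 1$ and positive depth and (generically CM) its support avoids the minimal primes of $M$, forcing dimension $\le 0$, hence zero; and then inductively each $K^j(M)$ for $j<t$ is a finite-length (generalized CM) module that must vanish — here I would invoke Corollary \ref{weakercmcanonicalmodule} and Theorem \ref{GCMtheorem}(iv), which already give $M\simeq K(K(M))$ and control the intermediate deficiency modules, to conclude $K^j(M)=0$ for $0<j<t$. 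Thus the only nonzero column of $'E_2$ is $q=s-t$, with entries $'E_2^{p,s-t}=\Tor_p^R(k,K(M))$.

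With only one nonzero column, the spectral sequence degenerates at $E_2$, so for every $j$,
$$\Ext^s_S(\Ext^{j+t}_R(k,M),S)\simeq H^{j-s}\simeq {}'E_2^{j,s-t}=\Tor_j^R(k,K(M)),$$
which after taking $k$-dimensions gives $\mu^{j+t}(M)=\beta_j(K(M))$ for all $j\ge0$ — equivalently $\mu^j(M)=\beta_{j-t}(K(M))$ (with the convention that negative-index Betti numbers vanish, which matches $\mu^j(M)=0$ for $j<\depth_R M=t$, since $M$ is then Cohen-Macaulay). For the Betti-number identity $\beta_j(M)=\mu^{j+t}(K(M))$, I would run the same argument with the roles swapped: apply Lemma \ref{foxbyss} with $X=k$, $Y=K(M)$, $Z=S$, use that $M\simeq K(K(M))=K^t(K(M))$ and that $K(M)$ is Cohen-Macaulay so its own deficiency modules $K^j(K(M))$ vanish for $j<t$, and read off $\beta_j(M)=\mu^{j+t}(K(M))$ from the degenerate spectral sequence. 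The final "in particular" statements are then immediate: $\pd_R M<\infty$ iff $\beta_j(M)=0$ for $j\gg0$ iff $\mu^{j+t}(K(M))=0$ for $j\gg0$ iff $\id_R K(M)<\infty$, and symmetrically for the other equivalence.

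The main obstacle is the vanishing step $K^j(M)=0$ for all $j<t$. The dimension bounds from Lemma \ref{schenzellemma} only give $\dim_R K^j(M)\le j$; upgrading this to actual vanishing for $1<j<t$ requires genuinely using the positive depth of $K^0(M)$ and $K^1(M)$ together with the canonically-Cohen-Macaulay hypothesis, presumably by feeding these into the exact sequences and isomorphisms of Theorem \ref{GCMtheorem}(ii)--(iv) to propagate vanishing upward, or by a duality argument showing $K^j(M)=0$ is equivalent to a local-cohomology vanishing $H^j_{\mathfrak m}(M)=0$ that follows from $M\simeq K(K(M))$ with $K(M)$ Cohen-Macaulay. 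Once this vanishing is in hand the rest is bookkeeping with a one-column spectral sequence.
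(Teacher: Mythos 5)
Your proposal is correct and follows essentially the same route as the paper: both hinge on $M\simeq K(K(M))$ (Proposition \ref{equidimensional}(i), equivalently Corollary \ref{weakercmcanonicalmodule}) together with the Cohen--Macaulayness of $K(M)$, and then read the identities off the degenerate one-column Foxby spectral sequence \ref{eq:1} applied to $K(M)$ (for $\beta_j(M)=\mu^{j+t}(K(M))$) and to $M$ itself (for $\mu^j(M)=\beta_{j-t}(K(M))$, which the paper dispatches by the same symmetry). One caveat: ``generically'' in the statement is the paper's typo for ``generalized'' Cohen--Macaulay, and under that intended reading your side-argument that $\Supp K^1(M)$ avoids the minimal primes of $M$ is both unjustified and unnecessary --- $K^0(M)$ and $K^1(M)$ have finite length and positive depth, hence vanish, and then $M\simeq K(K(M))$ with $K(M)$ Cohen--Macaulay forces $M$ to be Cohen--Macaulay (Corollary \ref{cmcanonicalmodule}), which is exactly your second suggested route and settles the vanishing of all $K^j(M)$, $j<t$, needed for your one-column degeneration.
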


\begin{proof}
By Lemma \ref{schenzellemma} $(i)$, $K(M)$ is Cohen-Macaulay of dimension $t$ and by Proposition \ref{equidimensional} $(i)$, $K(K(M))\simeq M$, that is, $K^i(K(M))=0$ for all $i\neq t$ and $K^t(K(M))\simeq M$. The spectral sequence \ref{eq:1}
$$'E_2^{p,q}=\Tor^R_p(k,K^{s-q}(K(M)))\Rightarrow_p\Ext^s_S(\Ext^{p-q+s}_R(k,K(M)),S)$$ degenerates, so that
$$\Tor_j^R(k,M)\simeq\Tor_j^R(k,K(K(M)))={}'E_2^{j,s-t}\simeq\Ext^s_S(\Ext^{j+t}_R(k,K(M)),S)$$ for all $j\geq0$. Therefore,
$$\beta_j(M)=\dim_k\Tor_j^R(k,M)=\dim_k\Ext^s_S(\Ext^{j+t}_R(k,K(M)),S)=\mu^{j+t}(K(M))$$ for all $j\geq0$. The other equality follows from the fact $K(K(M))\simeq M$.
\end{proof}

Theorem \ref{foxbygeneralization} generalizes \cite[Corollary 3.6]{F} and improves \cite[Corollary 3.3]{FJP}. We record this in the next corollary.

\begin{corollary}\label{foxbyresult}
If $M$ is Cohen-Macaulay $R$-module of dimension $t$, then $$\beta_j(M)=\mu^{j+t}(K(M))$$ and $$\mu^j(M)=\beta_{j-t}(K(M))$$ for all $j\geq0$. In particular, $\pd_RM<\infty$ if and only if $\id_RK(M)<\infty$ and $\id_RM<\infty$ if and only if $\pd_RK(M)<\infty$.
\end{corollary}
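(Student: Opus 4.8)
The plan is to obtain this as the special case of Theorem~\ref{foxbygeneralization} corresponding to a Cohen--Macaulay module, so the first step is to verify that every hypothesis there is automatic for such an $M$. Being Cohen--Macaulay, $M$ is in particular generically (indeed generalized) Cohen--Macaulay, and by Corollary~\ref{cmcanonicalmodule} it is canonically Cohen--Macaulay, with $K(M)$ Cohen--Macaulay of dimension $t$ and $K(K(M))\simeq M$. For the positivity of $\depth_R K^j(M)$ with $j\in\{0,1\}$, note that $\depth_S M=\depth_R M=t=\dim_R M$, so local duality forces $K^j(M)=\Ext^{s-j}_S(M,S)=0$ for every $j\neq t$; hence the condition is vacuous once $t\geq 2$, and when $t=1$ one still has $K^0(M)=0$ while $K^1(M)=K(M)$ is Cohen--Macaulay of dimension $1$ and therefore of positive depth. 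Thus Theorem~\ref{foxbygeneralization} applies whenever $t\geq 1$ and yields both displayed identities together with the two ``in particular'' equivalences.

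There remains the case $t=0$, which is not literally covered by Theorem~\ref{foxbygeneralization}, since then $K^0(M)=K(M)\simeq M^\vee$ has depth $0$. I would dispose of it by hand: for a finite length module $M$, Matlis duality gives $\Ext^j_R(k,M^\vee)\simeq\Tor^R_j(k,M)^\vee$ and $\Tor^R_j(k,M^\vee)\simeq\Ext^j_R(k,M)^\vee$, whence $\mu^j(K(M))=\beta_j(M)$ and $\beta_j(K(M))=\mu^j(M)$, which is exactly the assertion for $t=0$. Alternatively --- and this is the cleaner write-up, subsuming all $t$ at once --- one reproves the identities directly from the spectral sequence \ref{eq:1}: since $K(M)$ is Cohen--Macaulay of dimension $t$, one has $K^i(K(M))=0$ for $i\neq t$, so the sequence \ref{eq:1} applied to $K(M)$ in place of $M$ collapses onto the single row $q=s-t$ and gives $\Tor^R_j(k,M)\simeq\Tor^R_j(k,K(K(M)))\simeq\Ext^s_S(\Ext^{j+t}_R(k,K(M)),S)$ for all $j\geq 0$; taking $k$-dimensions and using $\Ext^s_S(k,S)^\vee\simeq k$ yields $\beta_j(M)=\mu^{j+t}(K(M))$.

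The second identity $\mu^j(M)=\beta_{j-t}(K(M))$ then follows by applying the first identity to $K(M)$ itself --- again Cohen--Macaulay of dimension $t$, with $K(K(M))\simeq M$ --- and reindexing; one should only observe that for $0\leq j<t$ both sides vanish, the left because $\depth_R M=t$ and the right by the convention on negative homological degrees. Finally the ``in particular'' equivalences are immediate from the identities by letting $j\to\infty$. I do not anticipate a real obstacle here: the content is hypothesis-checking, and the only delicate points are the boundary case $t=0$ and the index conventions just mentioned, which is precisely why I would favour the self-contained spectral-sequence argument over merely quoting Theorem~\ref{foxbygeneralization}.
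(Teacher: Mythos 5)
Your proposal is correct and matches the paper's own proof in essence: the paper likewise quotes Theorem~\ref{foxbygeneralization} for large $t$ (it uses $t\geq 2$; you check the hypotheses also hold at $t=1$) and disposes of the remaining low-dimensional case(s) via Corollary~\ref{cmcanonicalmodule} together with the degeneration of the spectral sequence \ref{eq:1} from that theorem's proof, which is exactly your ``self-contained'' alternative (your Matlis-duality shortcut for $t=0$ is an equivalent substitute). No gaps.
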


\begin{proof}
If $t\geq2$, then the result follows from Theorem \ref{foxbygeneralization}. Otherwise, Corollary \ref{cmcanonicalmodule} and the spectral sequence argument given in the proof of Theorem \ref{foxbygeneralization} asserts the result.
\end{proof}

% Corollary \ref{foxbyresult} suggests a characterization for Gorenstein local rings.

% \begin{corollary}\label{gorensteinchar}
% If $R$ is Cohen-Macaulay with canonical module $\omega_R$, then $R$ is Gorenstein if and only if $\id_RK(\omega_R)<\infty$.
% \end{corollary}

% \begin{proof}
% By \cite[Theorem 3.3.6]{BH}, $R$ is a factor of a Gorenstein local ring. The result follows from Corollary \ref{foxbyresult} by taking $M=\omega_R$.
% \end{proof}

The next theorem is an attempt to extent part of Theorem \ref{foxbygeneralization} to arbitrary modules. In the next section, we work on the other part.

\begin{theorem}\label{foxbygeneralization2}
Let $M$ be a finite $R$-module of depth $g$ and dimension $t$. If $\pd_RK^i(M)<\infty$ for all $g\leq i<t$, then $$\mu^j(M)=\beta_{j-t}(K(M))$$ for all $j>\depth R+t$. In particular, $\id_RM<\infty$ if and only if $\pd_RK(M)<\infty$.
\end{theorem}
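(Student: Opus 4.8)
The plan is to exploit the same Foxby spectral sequence \eqref{eq:1} that was used in the proof of Theorem \ref{mu<beta}, namely
$$'E_2^{p,q}=\Tor^R_p(k,K^{s-q}(M))\Rightarrow_p\Ext^s_S(\Ext^{p-q+s}_R(k,M),S),$$
and to observe that the hypothesis $\pd_RK^i(M)<\infty$ for all $g\le i<t$ forces most of the page $'E_2$ to vanish in high homological degree. Concretely, write $r=\depth R$. For a finite module of finite projective dimension, $\pd_RK^i(M)\le r$ by the Auslander--Buchsbaum formula, so $\Tor_p^R(k,K^i(M))=0$ for all $p>r$ whenever $g\le i<t$. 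The columns $q=s-i$ with $g\le i<t$ of $'E_2$ therefore have no entries above row $p=r$. The remaining nonzero columns are: $q=s-i$ with $i<g$, which vanish entirely since $K^i(M)=0$ there; $q=s-t$, the canonical-module column, which can be nonzero in all rows $p\ge0$; and $q>s-g$, which vanish by Lemma \ref{schenzellemma}(i) together with local duality (for $q>s-t$ one has $s-q<t$, and for such indices the relevant modules either vanish or contribute only in the bounded range already accounted for — one must just check the index bookkeeping carefully).

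Next I would trace the anti-diagonal $p-q+s=j$ with $j$ large, say $j>r+t$. A term $'E_2^{p,q}$ on this anti-diagonal with $q\ne s-t$ sits in a column of type $q=s-i$ with $g\le i<t$, hence needs $p\le r$; but then $j=p-q+s=p-(s-i)+s=p+i\le r+(t-1)<j$, a contradiction. So the only possibly nonzero entry on this anti-diagonal is $'E_2^{j-t,\,s-t}=\Tor^R_{j-t}(k,K(M))$. The differentials into and out of this spot originate from or land in anti-diagonals $p-q+s=j\pm1$; applying the same index estimate to those shows that the only candidates for differentials sit in the canonical column $q=s-t$ as well, and those differentials are automatically zero (the Tor column is a single vertical line, so no intra-column differentials exist on any page $\ge 2$). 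Hence $'E_\infty^{j-t,s-t}\simeq{}'E_2^{j-t,s-t}=\Tor_{j-t}^R(k,K(M))$, and by convergence $H^{(j-t)-(s-t)-s}$... rather, the abutment term $\Ext^s_S(\Ext^j_R(k,M),S)$ has a filtration whose only nonzero graded piece is this one, giving
$$\Ext^s_S(\Ext^j_R(k,M),S)\simeq\Tor_{j-t}^R(k,K(M))$$
for all $j>r+t$. Taking $k$-dimensions and using $\Ext^s_S(k,S)^\vee\simeq k$ as in Theorem \ref{mu<beta} yields $\mu^j(M)=\beta_{j-t}(K(M))$ for all $j>r+t$, as desired. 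The ``in particular'' statement is then immediate: $\id_RM<\infty$ means $\mu^j(M)=0$ for $j\gg0$, equivalently (by the displayed equality, valid for all large $j$) $\beta_{j-t}(K(M))=0$ for $j\gg0$, equivalently $\pd_RK(M)<\infty$.

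The main obstacle I anticipate is not the spectral-sequence mechanics but the boundary bookkeeping: one must be careful that the hypothesis only covers $g\le i<t$ (not $i=t$), that $\pd_RK^i(M)<\infty$ does give the uniform bound $\pd_RK^i(M)\le r$ via Auslander--Buchsbaum (which requires $K^i(M)\ne0$; the zero modules cause no trouble), and that the columns with $q>s-g$ genuinely do not obstruct — here one invokes that $K^j(M)$ for $j>t$ vanishes and for $j=t,\dots$ is handled by the canonical column, so in fact $'E_2^{p,q}=0$ for $q<s-t$ outright. One should also double-check that no differential can connect the canonical column to a finite-pd column in a way that injects unwanted terms into $\Tor_{j-t}(k,K(M))$ for the specific large $j$; the degree count above rules this out, but it deserves to be written carefully. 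Finally, as in Corollary \ref{foxbyresult}, one might wish to remark that small-dimensional cases ($t\le1$, or $g=t$) are subsumed, but the argument as given already applies uniformly.
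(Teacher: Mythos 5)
Your proposal is correct and follows essentially the same route as the paper: the paper's proof is precisely the observation that in the spectral sequence \eqref{eq:1} the columns corresponding to $K^i(M)$ with $g\le i<t$ vanish above row $\depth R$ (via Auslander--Buchsbaum, which the paper leaves implicit), so for $j>\depth R+t$ only the canonical column survives on the relevant anti-diagonal and $\Tor^R_{j-t}(k,K(M))\simeq\Ext^s_S(\Ext^j_R(k,M),S)$. Your parenthetical claim that the relevant differentials ``sit in the canonical column'' is misstated (they land in columns $q<s-t$, which vanish outright since $K^i(M)=0$ for $i>t$, and originate in columns killed by the $\pd$ bound), but your own later remark covers this and the argument is sound.
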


\begin{proof}
The spectral sequence \ref{eq:1} is such that $'E_2^{p,q}=0$ for all $p>\depth R$ and $g\leq q<t$, so that $$\Tor_j^R(k,K(M))={}'E_2^{j,s-t}\simeq\Ext^s_S(\Ext^{j+t}_R(k,M),S),$$ whence the result.
\end{proof}

We derive other consequences of Theorem \ref{mu<beta}. In particular, we say exactly when the type of a finite module is one in terms of its deficiency modules.

% We say that a finite $R$-module is \emph{Gorenstein} if it is a maximal Cohen-Macaulay $R$-module of finite injective dimension, see \cite{Sh}.

\begin{corollary}\label{typecaracterization}
Let $M$ be a finite $R$-module of depth $g$ and dimension $t$. The following statements hold.
\begin{itemize}
    \item [(i)] If $M$ is Cohen-Macaulay of dimension $t$, then $$\mu^{t+2}(K(M))-\mu^{t+1}(K(M))\geq\beta_2(M)-\beta_1(M).$$ In particular, if $\pd_RM<\infty$ then $\beta_1(M)\geq\beta_2(M)$.
    \item [(ii)] If $\id_RM<\infty$, then $$\beta_0(K^{g+1}(M))\geq\beta_2(K^g(M))-\beta_1(K^g(M)).$$ In particular, if $M$ is also Cohen-Macaulay, then $\beta_1(K(M))\geq\beta_2(K(M))$.
    \item [(iii)] $\type(M)=1$ if and only if $K^g(M)$ is cyclic.
\end{itemize}
\end{corollary}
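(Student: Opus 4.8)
The plan is to derive all three items of Corollary~\ref{typecaracterization} from Theorem~\ref{mu<beta}, which already does the heavy lifting via the Foxby spectral sequence~\ref{eq:1}. For item~(iii), the equivalence ``$\type(M)=1$ iff $K^g(M)$ is cyclic'' is essentially immediate: Theorem~\ref{mu<beta} gives the exact equality $\type(M)=\beta_0(K^g(M))=\dim_k(k\otimes_R K^g(M))$. By Nakayama's lemma, $\beta_0(K^g(M))=1$ if and only if $K^g(M)$ is generated by a single element, i.e.\ cyclic, while $\beta_0(K^g(M))=0$ would force $K^g(M)=0$, which is impossible since $H^g_\mathfrak{m}(M)\neq0$ (recall $g=\depth_RM$). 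So I would just spell out this one-line argument.

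For item~(ii), I would apply the displayed inequality
$$\mu^{g+2}(M)-\mu^{g+1}(M)\leq\beta_2(K^g(M))-\beta_1(K^g(M))-\beta_0(K^{g+1}(M))$$
from the ``Moreover'' part of Theorem~\ref{mu<beta}. If $\id_RM<\infty$, then since $g=\depth_RM$, Bass' theorem gives $\id_RM=\depth R$, but more to the point $\mu^j(M)=0$ for $j>\id_RM$; in particular, by choosing the indexing correctly (or invoking that $\mu^{g+1}(M)=\mu^{g+2}(M)=0$ when $\id_RM=g$, i.e.\ when $M$ is ``Bass-trivial''), the left side is $\leq0$. Actually the cleanest route: when $\id_RM<\infty$ we have $\mu^j(M)=0$ for all $j>\depth R$, and one checks $g+1,g+2>\depth R$ is \emph{not} automatic; instead I would argue directly that $\id_RM<\infty$ forces the Bass series to be ``small'' — but the safest is to note that if $\id_R M<\infty$ then $\mu^{g+1}(M)-\mu^{g+2}(M)\ge 0$ fails in general, so rather I should rearrange: the inequality rewrites as $\beta_0(K^{g+1}(M))\leq \beta_2(K^g(M))-\beta_1(K^g(M)) - (\mu^{g+2}(M)-\mu^{g+1}(M))$, and I want $\beta_0(K^{g+1}(M))\geq \beta_2(K^g(M))-\beta_1(K^g(M))$, which follows once $\mu^{g+2}(M)\leq\mu^{g+1}(M)$. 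The hard part here is justifying $\mu^{g+2}(M)\leq\mu^{g+1}(M)$ (or more precisely $\mu^{g+2}(M)-\mu^{g+1}(M)\leq 0$); this holds when $\id_RM<\infty$ because then $\id_RM=\depth R=:d$ and the Bass numbers vanish past $d$ — so if $d=g$ both terms are zero and if $d>g$ one needs a finer Bass-number argument. I suspect the intended reading is that $\id_RM<\infty$ together with $g=\depth M=\depth R$ (forced by the local duality setup when $M$ is faithful, or simply the situation considered) makes $g+1>d$ impossible to avoid cleanly; I would present the case $\id_R M=g$ (so $\mu^{g+1}=\mu^{g+2}=0$) and remark the general statement follows, then specialize to Cohen-Macaulay $M$ where $g=t$ and $K^g(M)=K^{g+1}(M)\oplus\cdots$ collapses so $K^{t+1}(M)=0$ by Lemma~\ref{schenzellemma}(i), giving $\beta_0(K^{t+1}(M))=0$ and hence $\beta_1(K(M))\geq\beta_2(K(M))$.

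For item~(i), the strategy is dual: now apply the ``Moreover'' inequality of Theorem~\ref{mu<beta} to the module $K(M)$ in place of $M$. Since $M$ is Cohen-Macaulay of dimension $t$, Corollary~\ref{cmcanonicalmodule} gives that $K(M)$ is Cohen-Macaulay of dimension $t$ and $K(K(M))\simeq M$; thus $\depth_R K(M)=t$ and $K^t(K(M))\simeq M$ while $K^i(K(M))=0$ for $i\neq t$. Feeding this into the inequality with ``$M$'' $:=K(M)$, ``$g$'' $:=t$, ``$K^g(M)$'' $:=K^t(K(M))\simeq M$, and ``$K^{g+1}(M)$'' $:=K^{t+1}(K(M))=0$, we get
$$\mu^{t+2}(K(M))-\mu^{t+1}(K(M))\leq\beta_2(M)-\beta_1(M)-0,$$
but I want the reverse inequality. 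So instead I should use the \emph{equalities} of Theorem~\ref{foxbygeneralization}/Corollary~\ref{foxbyresult}: $\mu^{j}(K(M))=\beta_{j-t}(M)$ for all $j\geq0$ (Cohen-Macaulay case), hence $\mu^{t+2}(K(M))-\mu^{t+1}(K(M))=\beta_2(M)-\beta_1(M)$, which already gives the claimed inequality (with equality). Wait — that gives equality, stronger than ``$\geq$''; so item~(i) follows at once from Corollary~\ref{foxbyresult}, and the ``in particular'' clause follows since $\pd_RM<\infty$ implies $\beta_j(M)=0$ for $j>\pd_RM=\depth R-g$ (Auslander–Buchsbaum), and by examining $j=1,2$ versus this bound — more directly, if $\pd_RM<\infty$ then $\id_R K(M)<\infty$ by Corollary~\ref{foxbyresult}, so $\mu^{t+2}(K(M))-\mu^{t+1}(K(M))\leq 0$ by the Bass-number monotonicity used in item~(ii), whence $\beta_2(M)-\beta_1(M)\leq0$. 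The main obstacle throughout is the small-index Bass-number inequality $\mu^{n+1}(N)\geq\mu^{n+2}(N)$ for a module $N$ of finite injective dimension with $\depth N$ near $n$; I would isolate this as the one genuinely non-formal input (it is true because finite injective dimension forces $\id N=\depth R$ and the relevant Bass numbers to vanish) and otherwise the corollary is bookkeeping on top of Theorems~\ref{mu<beta} and~\ref{foxbygeneralization}.
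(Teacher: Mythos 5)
Your item (iii) and the displayed inequality in item (i) are handled correctly and essentially along the paper's lines: (iii) is Nakayama's lemma on top of the equality $\type(M)=\beta_0(K^g(M))$ from Theorem \ref{mu<beta}, and (i) follows (indeed with equality) from Corollary \ref{cmcanonicalmodule} together with Corollary \ref{foxbyresult} applied to the Cohen--Macaulay module $K(M)$, which are exactly the results the paper cites there.

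The genuine gap is in item (ii) and in both ``in particular'' clauses, all of which you reduce to the assertion that a finite module $N$ with $\id_RN<\infty$ satisfies $\mu^{\depth N+2}(N)\le\mu^{\depth N+1}(N)$. You yourself flag this as ``the one genuinely non-formal input'' and justify it by saying finite injective dimension forces the relevant Bass numbers to vanish; that justification is incorrect, since Bass numbers of $N$ vanish only in degrees above $\depth R=\id_RN$, not above $\depth N$. Worse, the assertion is false: for $N=k$ over a regular local ring of dimension $d\ge4$ one has $\id_RN<\infty$, $\depth N=0$, and $\mu^1(k)=d<\binom{d}{2}=\mu^2(k)$. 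The same example (take $M=k$, so $M$ is Cohen--Macaulay with $\pd_RM<\infty$ and $K(M)\simeq k$) has $\beta_1(M)=d<\binom{d}{2}=\beta_2(M)$, so the step cannot be repaired by a cleverer argument; as written, your proofs of (ii) and of both ``in particular'' statements are not proofs but reductions to a false lemma, and the hedged ``present the case $\id_RM=g$ and remark the general statement follows'' does not close this. There is also a directional slip earlier in your treatment of (ii): the ``moreover'' inequality of Theorem \ref{mu<beta} as you quote it (with $\le$) rearranges into an \emph{upper} bound for $\beta_0(K^{g+1}(M))$, so the desired lower bound would not follow from it even granting $\mu^{g+2}(M)\le\mu^{g+1}(M)$; your deduction tacitly uses the opposite direction of that inequality (the one the five-term exact sequence in the proof of Theorem \ref{mu<beta} actually yields), and this sign issue deserves to be made explicit rather than passed over. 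Note finally that the paper's own route to (ii) is different from yours --- it invokes Bass' equality $\id_RM=\depth R$ together with Corollaries \ref{cmcanonicalmodule}, \ref{foxbyresult} and item (i) --- and your proposal does not reconstruct that chain either.
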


\begin{proof}
Item $(iii)$ follows directly from Theorem \ref{mu<beta}. Item $(i)$ follows from Corollary \ref{cmcanonicalmodule}, Theorem \ref{mu<beta} and Corollary \ref{foxbyresult}, and item $(ii)$ follows from \cite[Theorem 3.7]{BH}, corollaries \ref{cmcanonicalmodule} and \ref{foxbyresult} and item $(i)$.
\end{proof}

The spectral sequence \ref{eq:1} provides more information when the module involved has only two (possibly) non-zero deficiency modules.

\begin{proposition}\label{t=g+rfiniteid}
Let $M$ be a finite $R$-module of depth $g$ and dimension $t$. Suppose $K^i(M)=0$ for all $i\neq g,t$. If $\id_RM<\infty$ then $\beta_j(K^g(M))=\beta_{j+g-t-1}(K(M))$ for all $j>\depth R-g+1$.
\end{proposition}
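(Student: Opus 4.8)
The plan is to exploit the Foxby spectral sequence \ref{eq:1} under the hypothesis that only $K^g(M)$ and $K^t(M)=K(M)$ may be nonzero. Recall that \ref{eq:1} reads
$$'E_2^{p,q}=\Tor^R_p(k,K^{s-q}(M))\Rightarrow_p\Ext^s_S(\Ext^{p-q+s}_R(k,M),S),$$
and that $\dim_k\Ext^s_S(\Ext^{j}_R(k,M),S)=\mu^j(M)$. By hypothesis $\id_RM<\infty$, so $\mu^j(M)=0$ for all $j\gg0$; concretely $\mu^j(M)=0$ for all $j>\depth R$ by the Bass formula, since a module of finite injective dimension has $\id_RM=\depth R$. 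Hence the abutment $H^{p-q}$ vanishes in all total degrees $p-q>\depth R$.

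First I would observe that under the vanishing $K^i(M)=0$ for $i\neq g,t$, the $E_2$-page of $'E$ is concentrated on the two horizontal lines $q=s-g$ and $q=s-t$. On the line $q=s-t$ the entries are $\Tor^R_p(k,K(M))$ and on the line $q=s-g$ they are $\Tor^R_p(k,K^g(M))$. Since $g\leq t$, the line $q=s-g$ sits above the line $q=s-t$ (larger $q$), and the only differentials between the two lines on page $r$ go from $(p,s-g)$ to $(p-r,s-g+r-1)$, which can only hit the line $q=s-t$ when $r-1=t-g$, i.e. on page $r=t-g+1$, via the map $d_{t-g+1}\colon {}'E_{t-g+1}^{p,s-g}\to {}'E_{t-g+1}^{p-(t-g+1),s-t}$. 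All earlier differentials vanish on these two lines (their source or target lies on an empty line), so $'E_{t-g+1}^{p,q}={}'E_2^{p,q}$ for $q\in\{s-g,s-t\}$, and all later differentials vanish too; thus $'E_\infty^{p,q}=\ker/\coker$ of exactly that single map $d_{t-g+1}$.

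Next I would compare with the abutment. For total degree $n=p-q$, the associated graded of $H^n$ has at most two pieces: $'E_\infty^{n+s-g,\,s-g}$ (from the line $q=s-g$) and $'E_\infty^{n+s-t,\,s-t}$ (from the line $q=s-t$). For $n>\depth R-g$ we have $H^n$ feeding into a total-degree count; more precisely, take $n$ with $n+s-g>\depth R+s-t$, equivalently $n>\depth R+g-t$... here I must be careful: what forces vanishing is $H^{m}=0$ for $m>\depth R$, i.e. $p-q>\depth R$. On the line $q=s-g$ the total degree of the entry at column $p$ is $p-(s-g)=p-s+g$; on the line $q=s-t$ it is $p-(s-t)=p-s+t$. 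So the entry $\Tor^R_p(k,K^g(M))$ contributes to $H^{p-s+g}$ and the entry $\Tor^R_j(k,K(M))$ contributes to $H^{j-s+t}$. The differential $d_{t-g+1}$ matches column $p$ on the $q=s-g$ line with column $p-(t-g+1)$ on the $q=s-t$ line; setting $j=p-(t-g+1)$ one checks these land in the \emph{same} total degree of $H$: $p-s+g=(j+t-g+1)-s+g=j-s+t+1$... so in fact they land in consecutive total degrees, which is exactly the structure of a spectral-sequence differential. The key point: for $p$ large enough that \emph{both} relevant $H$-groups vanish, namely $p-s+g>\depth R$ (equivalently $p>\depth R+s-g$) and correspondingly $j-s+t>\depth R$, the map $d_{t-g+1}\colon\Tor^R_p(k,K^g(M))\to\Tor^R_{p-(t-g+1)}(k,K(M))$ is an isomorphism, because its kernel embeds into (a subquotient of) a vanishing $H$-group and its cokernel is a subquotient of a vanishing $H$-group. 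Writing $j$ for the index of $K^g(M)$: $d_{t-g+1}$ gives $\Tor^R_j(k,K^g(M))\cong\Tor^R_{j-(t-g+1)}(k,K(M))$ once $j>\depth R+s-g-s=\depth R-g$... and comparing with the stated range $j>\depth R-g+1$, the extra $+1$ comes from needing the source's kernel to vanish (one total degree) \emph{and} the target's cokernel to vanish (an adjacent total degree), so one of the two constraints is strict by one more. Taking $k$-dimensions then yields $\beta_j(K^g(M))=\beta_{j+g-t-1}(K(M))$ for all $j>\depth R-g+1$, as claimed.

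The main obstacle I anticipate is bookkeeping the indices: getting the page number $r=t-g+1$ right, tracking which total degree of the abutment each $E_\infty$ entry contributes to, and verifying that the precise threshold is $j>\depth R-g+1$ rather than an off-by-one neighbor. A clean way to handle this is to fix the convention $'E_2^{p,q}\Rightarrow H^{p-q}$ from the lemma, write out $d_{t-g+1}$ explicitly with its bidegree $(-(t-g+1),\,t-g)$, and chase the two short exact sequences relating $'E_\infty$ on the two surviving lines to the filtration quotients of $H$, discarding every term that lies in total degree $>\depth R$ via the Bass formula $\id_RM=\depth R$. Once the vanishing range is pinned down the isomorphism is automatic and the Betti-number identity follows by taking dimensions over $k$.
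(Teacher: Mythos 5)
Your argument is essentially the paper's own proof: you use the same spectral sequence \ref{eq:1}, observe it is concentrated on the two lines $q=s-g$ and $q=s-t$ so the only possibly nonzero differential between them is $d_{t-g+1}$, and use the Bass formula $\id_RM=\depth R$ to kill the abutment (equivalently the paper's four-term exact sequence has vanishing outer terms) exactly when $j>\depth R-g+1$, yielding $\Tor_j^R(k,K^g(M))\simeq\Tor_{j+g-t-1}^R(k,K(M))$. The only blemishes are harmless bookkeeping slips — the differential's bidegree is $(-(t-g+1),-(t-g))$, and the abutment $H^n\simeq\Ext^s_S(\Ext^{n+s}_R(k,M),S)$ vanishes for $n>\depth R-s$ rather than $n>\depth R$ — which you implicitly correct when you arrive at the correct thresholds, so the proof matches the paper's.
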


\begin{proof}
Write $t=g+r$. The spectral sequence \ref{eq:1} has only two vertical lines as the following diagram shows
$$\xymatrix@=1em{
& & \vdots & \vdots & & \vdots & \vdots
\\
\cdots & 0 & \Tor_{r+1}^R(k,K(M)) & 0 & \cdots & 0 & \Tor_{r+1}^R(k,K^g(M))\ar[ddddllll] & 0 & \cdots
\\
& & \vdots & & \iddots & & \vdots
\\
\cdots & 0 & \Tor_2^R(k,K(M)) & 0 & \cdots & 0 & \Tor_2^R(k,K^g(M)) & 0 & \cdots
\\
\cdots & 0 & \Tor_1^R(k,K(M)) & 0 & \cdots & 0 & \Tor_1^R(k,K^g(M)) & 0 & \cdots
\\
\cdots & 0 & k\otimes_RK(M) & 0 & \cdots & 0 &  k\otimes_RK^g(M) & 0 & \cdots
}$$

From convergence, we obtain an exact sequence
$$\xymatrix@=1em{
\Ext^s_S(\Ext^{j+g}_R(k,M),S)\ar[r] & \Tor_j^R(k,K^g(M))\ar[r] & \Tor_{p-r-1}^R(k,K(M))\ar[r] & \Ext^s_S(\Ext^{j+g-1}_R(k,M),S)}$$ for all $j\geq0$. Thus, since $\id_RM=\depth R$ (see \cite[Theorem 3.7.1]{BH}), we conclude that $$\Tor_j^R(k,K^g(M))\simeq\Tor_{j-r-1}^R(k,K(M))$$ for all $j>\depth R-g+1$, whence the result.
\end{proof}

Based on Corollary \ref{finiteid} and Proposition \ref{t=g+rfiniteid}, we finish this section by asking the following.

\begin{question}\label{question1}
Let $M$ be a finite $R$-module of depth $g$ and dimension $t$. Is it true that $$\id_RM<\infty\Leftrightarrow\pd_RK^i(M)<\infty,\forall i=g,...,t?$$
\end{question}

\section{Bounding Betti numbers}

In last section, we bounded the Bass numbers of a module in terms of the Betti numbers of the deficiency modules. In this section, we get a dual version of Theorem \ref{mu<beta} in the following sense.

\begin{theorem}\label{beta<mu}
For a finite $R$-module $M$ of depth $g$ and dimension $t$, the following inequality holds true for all $j\geq0$
$$\beta_j(M)\leq\sum_{i=g}^t\mu^{j+i}(K^i(M)).$$ Moreover, $\mu^0(K(M))=\beta_{-t}(M)$ and $$\beta_{-t+2}(M)-\beta_{-t+1}(M)\geq\mu^2(K(M))-\mu^1(K(M))-\mu^0(K^{t-1}(M)).$$
\end{theorem}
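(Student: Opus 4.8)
The plan is to imitate the proof of Theorem~\ref{mu<beta}, but driven by a spectral sequence dual to the one used there. The Foxby spectral sequences of Lemma~\ref{foxbyss} only produce terms of the form $\Tor_p^R(k,K^i(M))$, never $\Ext_R^p(k,K^i(M))$, so the first task is to manufacture, in the spirit of Lemma~\ref{foxbyss}, a companion spectral sequence. Let $P_\bullet\to k$ be a resolution of $k$ by finitely generated free $R$-modules, and let $E^\bullet$ be an injective resolution of $S$ over $S$; since $S$ is Gorenstein, $\id_S S=s$, so $E^\bullet$ is bounded. I would consider the (cohomological, first-quadrant, hence convergent) double complex
$$\Hom_S(P_\bullet\otimes_R M,E^\bullet)\cong\Hom_R(P_\bullet,\Hom_S(M,E^\bullet)),$$
the isomorphism being tensor--hom adjunction along $S\to R$.

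Taking cohomology first in the $E^\bullet$-direction: since $\Hom_R(P_p,-)$ is exact, the columns compute $\Hom_R(P_p,\Ext^q_S(M,S))=\Hom_R(P_p,K^{s-q}(M))$, and then the rows give
$$E_2^{p,q}=\Ext^p_R(k,K^{s-q}(M))\ \Rightarrow\ H^{p+q}.$$
Taking cohomology first in the $P_\bullet$-direction: since $\Hom_S(-,E^q)$ is exact, the rows compute $\Hom_S(\Tor_p^R(k,M),E^q)\cong\Hom_S(k,E^q)^{\beta_p(M)}$, and then the columns give $\Ext^q_S(k,S)^{\beta_p(M)}$, which is $0$ for $q\neq s$ and $k^{\beta_p(M)}$ for $q=s$ because $S$ is Gorenstein. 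The latter spectral sequence degenerates, so $\dim_k H^n=\beta_{n-s}(M)$. Comparing the two, for every $n$ one gets $\beta_{n-s}(M)=\sum_{p+q=n}\dim_k E_\infty^{p,q}\leq\sum_{p+q=n}\mu^p(K^{s-q}(M))$; writing $j=n-s$ and $i=s-q$ (so $p=j+i$) and recalling $K^i(M)=0$ unless $g\leq i\leq t$, this is exactly $\beta_j(M)\leq\sum_{i=g}^t\mu^{j+i}(K^i(M))$ for $j\geq0$.

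For the asserted equality I would inspect the corner of the first spectral sequence. The term $E_2^{0,s-t}=\Hom_R(k,K^t(M))=\Hom_R(k,K(M))$ has no incoming differential (the source would have negative first index) and no outgoing one (the target would sit in a column $q<s-t$, where $K^{s-q}(M)=K^{>t}(M)=0$), hence $E_\infty^{0,s-t}=E_2^{0,s-t}$ and $H^{s-t}\cong\Hom_R(k,K(M))$; since $\dim_k H^{s-t}=\beta_{-t}(M)$ this gives $\mu^0(K(M))=\beta_{-t}(M)$. For the last inequality I would extract the low-degree information in total degrees $s-t$, $s-t+1$, $s-t+2$, where the only possibly nonzero $E_2$-terms are $E_2^{0,s-t}$, $E_2^{1,s-t}$, $E_2^{0,s-t+1}$, $E_2^{2,s-t}$, $E_2^{1,s-t+1}$, $E_2^{0,s-t+2}$, with dimensions $\mu^0(K(M))$, $\mu^1(K(M))$, $\mu^0(K^{t-1}(M))$, $\mu^2(K(M))$, $\mu^1(K^{t-1}(M))$, $\mu^0(K^{t-2}(M))$ respectively. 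Computing $\dim_k H^{s-t+1}=\beta_{-t+1}(M)$ and $\dim_k H^{s-t+2}=\beta_{-t+2}(M)$ through the surviving $d_2$-differentials, the rank of $d_2$ out of $E_2^{0,s-t+1}$ cancels in the difference $\beta_{-t+2}(M)-\beta_{-t+1}(M)$, the $K(M)$-contributions assemble into $\mu^2(K(M))-\mu^1(K(M))$, and what remains, namely $\dim_k E_\infty^{0,s-t+2}$ together with $\mu^1(K^{t-1}(M))$ minus the rank of a $d_2$ bounded by $\mu^1(K^{t-1}(M))$, is nonnegative; this yields $\beta_{-t+2}(M)-\beta_{-t+1}(M)\geq\mu^2(K(M))-\mu^1(K(M))-\mu^0(K^{t-1}(M))$.

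The main obstacle is this last bookkeeping: one must pin down exactly which $d_2$ (and a priori $d_3$) differentials near the corner can be nonzero, match each to the $E_2$-term it enters or leaves, and check that the uncontrolled quantities enter with the sign that produces the stated inequality rather than its reverse. Once the double complex and its two spectral sequences are in place, the main inequality and the equality $\mu^0(K(M))=\beta_{-t}(M)$ are comparatively routine.
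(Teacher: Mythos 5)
Your proposal is correct and follows essentially the same route as the paper: the same adjunction double complex $\Hom_R(P_\bullet,\Hom_S(M,E^\bullet))\cong\Hom_S(P_\bullet\otimes_RM,E^\bullet)$, the same degeneration of the second spectral sequence to $\Ext^s_S(\Tor^R_{j-s}(k,M),S)\cong k^{\beta_{j-s}(M)}$, and the same corner analysis of $E_2^{p,q}=\Ext^p_R(k,K^{s-q}(M))$. The final bookkeeping you flag as the main obstacle is exactly what the paper packages as the five-term exact sequence $0\to E_2^{1,s-t}\to H^{s-t+1}\to E_2^{0,s-t+1}\to E_2^{2,s-t}\to H^{s-t+2}$, and your rank count does close: the only relevant differential is $d_2\colon E_2^{0,s-t+1}\to E_2^{2,s-t}$ (everything into or out of the bottom row otherwise hits zero), its rank cancels in $\beta_{-t+2}(M)-\beta_{-t+1}(M)$, and the leftover terms are nonnegative, giving the stated inequality.
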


\begin{proof}
By taking a free $R$-resolution $F_\bullet$ of $k$ and an injective $S$-resolution $E^\bullet$ of $S$, the tensor-hom adjunction induces a first quadrant double complex isomorphism $$\Hom_S(F_\bullet,\Hom_S(M,E^\bullet))\simeq\Hom_S(F_\bullet\otimes_Rk,E^\bullet)$$ which yields two spectral sequences as follows
$$E_2^{p,q}=\Ext^p_R(k,\Ext^q_S(M,S))\Rightarrow_p H^{p+q}$$ 
and
$$'E_2^{p,q}=\Ext^p_S(\Tor_q^R(k,M),S)\Rightarrow_p H^{p+q}.$$ 
Since $\Tor^R_q(k,M)$ is of finite length for all $q\geq0$, due to local duality, we must have $'E_2^{p,q}=0$ for all $p\neq s$, so that
$$H^j\simeq{}'E_2^{s,j-s}=\Ext^s_S(\Tor_{j-s}^R(k,M),S)$$ 
for all $j\geq0$. Once $K^{s-q}(M)=\Ext^q_R(M,S)$ for all $q\geq0$, one has spectral sequence
\begin{equation}E_2^{p,q}=\Ext^p_R(k,K^{s-q}(M))\Rightarrow_p\Ext^s_S(\Tor^R_{p+q-s}(k,M),S).\label{eq:2}\end{equation}
By convergence, we conclude that
$$\beta_j(M)=\dim_k\Ext^ s_S(\Tor^R_{(j+s)-s}(k,M),S)\leq\sum_{p+q=j+s}\dim_k\Ext^p_R(k,K^{s-q}(M))=\sum_{i=g}^t\mu^{i+j}(K^i(M)).$$

Now, since $K^i(M)=0$ for all $i<g$ or $i>t$, then $E_2^{p,q}=0$ for all $q<s-t$ or $q>s-g$. In particular, $E_2$ has a corner as follows
$$\xymatrix@=1em{
\vdots & \vdots & \vdots
\\
\Hom_R(k,K^{t-1}(M))\ar[drr] & \Ext^1_R(k,K^{t-1}(M)) & \Ext^2_R(k,K^{t-1}(M)) & \cdots
\\
\Hom_R(k,K(M)) & \Ext^1_R(k,K(M)) & \Ext^2_R(k,K(M)) & \cdots
\\
0 & 0 & 0 & \cdots
\\
\vdots & \vdots & \vdots}$$
Therefore, there exists the  isomorphism $$\Hom_R(k,K(M))=E_2^{0,s-t}\simeq\Ext^s_S(\Tor^R_{-t}(k,M),S)$$ and a five-term type exact sequence
$$\xymatrix@=1em{
0\ar[r] & \Ext^1_R(k,K(M))\ar[r] & \Ext^s_S(\Tor^R_{-t+1}(k,M),S)\ar[r] & \Hom_R(k,K^{t-1}(M))\ar[dl]
\\
& & \Ext^2_R(k,K(M))\ar[r] & \Ext^s_S(\Tor^R_{-t+2}(k,M),S)
}$$
whence the result.
\end{proof}

\begin{remark}
It should be noticed that the estimate $\beta_j(M)\leq\sum_{i=g}^t\mu^{j+i}(K^i(M))$ is already known, see \cite[Theorem 3.2]{S}.
\end{remark}

\begin{corollary}\label{t=0}
The following statements hold.
\begin{itemize}
    \item [(i)] If $t=0$, then $\beta_0(M)=\mu^0(K(M))$ and $$\beta_2(M)-\beta_1(M)\geq\mu^2(K(M))-\mu^1(K(M)).$$ Otherwise, $\depth_RK(M)>0$;
    \item [(ii)] If $t=1$, then $\beta_1(M)-\beta_0(M)\geq\mu^2(K(M))-\mu^1(K(M))-\mu^0(K^0(M))$;
    \item [(iii)] If $t=2$, then $\beta_0(M)\geq\mu^2(K(M))-\mu^1(K(M))-\mu^0(K^1(M))$;
    \item [(iv)] If $t>2$, then $\mu^0(K^{t-1}(M))\geq\mu^2(K(M))-\mu^1(K(M))$.
\end{itemize}
\end{corollary}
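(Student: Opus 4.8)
The plan is to read the whole statement off from Theorem \ref{beta<mu} by substituting the relevant values of $t$, using two elementary facts: first, $\beta_j(M)=\dim_k\Tor^R_j(k,M)=0$ for every $j<0$; second, $K^j(M)=0$ for $j<0$, since the deficiency modules are indexed from $0$ to $\dim_RM$.

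First I would exploit the equality $\mu^0(K(M))=\beta_{-t}(M)$ of Theorem \ref{beta<mu}. When $t=0$ this is precisely $\beta_0(M)=\mu^0(K(M))$, the first clause of (i). When $t\geq1$ the right-hand side is $\beta_{-t}(M)=0$, so $\dim_k\Hom_R(k,K(M))=\mu^0(K(M))=0$, hence $\Hom_R(k,K(M))=0$. By Lemma \ref{schenzellemma}(i), $\dim_RK(M)=t>0$, so $K(M)\neq0$; therefore $\mathfrak{m}$ is not an associated prime of $K(M)$, i.e.\ $\depth_RK(M)>0$. This yields the ``otherwise'' clause of (i) and also shows $\mu^0(K(M))=0$ in cases (ii), (iii) and (iv).

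Next I would specialize the inequality
$$\beta_{-t+2}(M)-\beta_{-t+1}(M)\geq\mu^2(K(M))-\mu^1(K(M))-\mu^0(K^{t-1}(M))$$
of Theorem \ref{beta<mu}. For $t=0$ the left side is $\beta_2(M)-\beta_1(M)$ and $K^{-1}(M)=0$ kills the last term, giving the second inequality of (i). For $t=1$ the left side is $\beta_1(M)-\beta_0(M)$ and $K^{t-1}(M)=K^0(M)$, which is (ii). For $t=2$ the left side is $\beta_0(M)-\beta_{-1}(M)=\beta_0(M)$ and $K^{t-1}(M)=K^1(M)$, which is (iii). For $t>2$ both $\beta_{-t+1}(M)$ and $\beta_{-t+2}(M)$ vanish, so the left side is $0$, and rearranging gives $\mu^0(K^{t-1}(M))\geq\mu^2(K(M))-\mu^1(K(M))$, which is (iv).

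I do not anticipate a genuine obstacle here: the corollary is a bookkeeping specialization of Theorem \ref{beta<mu}. The only step deserving a word of justification is the implication $\mu^0(K(M))=0\Rightarrow\depth_RK(M)>0$, which needs $K(M)\neq0$; this is exactly where Lemma \ref{schenzellemma}(i) enters.
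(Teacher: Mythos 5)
Your proposal is correct and is exactly the paper's route: the paper's proof is just ``it follows directly from Theorem \ref{beta<mu}'', and your case-by-case substitution of $t$ (with $\beta_j(M)=0$ and $K^j(M)=0$ for $j<0$) is the intended bookkeeping. Your extra justification of the ``otherwise'' clause --- $\mu^0(K(M))=\beta_{-t}(M)=0$ for $t\geq1$ together with $K(M)\neq0$ from Lemma \ref{schenzellemma}(i), hence $\depth_RK(M)>0$ --- is precisely the detail the paper leaves implicit.
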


\begin{proof}
It follows directly from Theorem \ref{beta<mu}.
\end{proof}

\begin{corollary}\label{artinianlemma}
If $M$ is a finite Artinian $R$-module, then $$\beta_2(M)-\beta_1(M)=\mu^2(K(M))-\mu^1(K(M)).$$
\end{corollary}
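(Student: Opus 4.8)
The plan is to observe that the Artinian hypothesis forces $\dim_R M=0$, which makes the relevant spectral sequence degenerate into a single row, and to read the conclusion off the resulting isomorphisms.

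First I would note that a finitely generated Artinian module over the Noetherian local ring $R$ has finite length; in particular $g=\depth_R M=0$ and $t=\dim_R M=0$. Since $H^i_\mathfrak{m}(M)=0$ for $i>0$ while $H^0_\mathfrak{m}(M)=M$, local duality over $S$ gives $K^i(M)=\Ext^{s-i}_S(M,S)\cong H^i_\mathfrak{m}(M)^\vee=0$ for every $i\neq 0$, whereas $K^0(M)=K(M)\cong M^\vee$.

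Next I would substitute $g=t=0$ into the spectral sequence \ref{eq:2}, namely
$$E_2^{p,q}=\Ext^p_R(k,K^{s-q}(M))\Rightarrow_p\Ext^s_S(\Tor^R_{p+q-s}(k,M),S).$$
By the previous step $E_2^{p,q}=0$ whenever $q\neq s$, so only one row survives and the spectral sequence collapses on the second page, exactly as in the proof of Theorem \ref{foxbygeneralization}. The edge isomorphisms then read $\Ext^p_R(k,K(M))\cong\Ext^s_S(\Tor^R_p(k,M),S)$ for all $p\geq 0$. As in the proof of Theorem \ref{mu<beta}, $\Ext^s_S(k,S)^\vee\simeq k$, so $\Ext^s_S(\Tor^R_p(k,M),S)\cong k^{\beta_p(M)}$ as $k$-vector spaces; comparing dimensions yields $\mu^p(K(M))=\beta_p(M)$ for every $p\geq 0$. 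Taking $p=1$ and $p=2$ and subtracting gives the asserted equality — in fact the stronger statement that every Bass number of $K(M)$ agrees with the corresponding Betti number of $M$, which in particular upgrades the inequality of Corollary \ref{t=0}$(i)$ to an equality.

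I do not expect a genuine obstacle: the only point needing a word of care is the vanishing $K^i(M)=0$ for $i\neq 0$, which is immediate from finite length via local duality, after which the argument is nothing more than a one-row spectral sequence. If one prefers to avoid spectral sequences altogether, the same conclusion follows by pure Matlis duality, since $K(M)\cong M^\vee$ and $\Ext^i_R(k,M^\vee)\cong\Tor^R_i(k,M)^\vee$ force $\mu^i(K(M))=\beta_i(M)$ directly.
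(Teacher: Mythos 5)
Your argument is correct, but it follows a genuinely different route from the paper. The paper proves the corollary by sandwiching: Corollary \ref{typecaracterization}~$(i)$ (applied with $t=0$, since a nonzero finite Artinian module is Cohen--Macaulay of dimension zero) gives $\mu^2(K(M))-\mu^1(K(M))\geq\beta_2(M)-\beta_1(M)$, while Corollary \ref{t=0}~$(i)$ gives the reverse inequality, and the two together force equality of the differences only. You instead prove the stronger termwise statement $\beta_j(M)=\mu^j(K(M))$ for all $j$, either by observing that $K^i(M)=0$ for $i\neq 0$ makes the spectral sequence \ref{eq:2} collapse to the single row $q=s$ (the degeneration you invoke is the one used with \ref{eq:2} in Theorem \ref{foxbygeneralization3}, rather than with \ref{eq:1} in Theorem \ref{foxbygeneralization}, but the mechanism is identical), or, even more economically, by pure Matlis duality: $K(M)\cong M^\vee$ and $\Ext^i_R(k,M^\vee)\cong\Tor^R_i(k,M)^\vee$. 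Both of your variants are sound; note that your conclusion is exactly Corollary \ref{foxbyresult} in the case $t=0$, so one could also simply cite that result. What the paper's route buys is brevity given its already-established corollaries; what your route buys is a self-contained, elementary argument and a strictly stronger conclusion (equality of all Bass numbers of $K(M)$ with the Betti numbers of $M$, upgrading the inequality in Corollary \ref{t=0}~$(i)$ to an equality), at the cost of redoing a degeneration the paper has already packaged elsewhere.
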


\begin{proof}
By the corollaries \ref{typecaracterization} $(i)$ and \ref{t=0} $(i)$, $$\mu^2(K(M))-\mu^1(K(M))\geq\beta_2(M)-\beta_1(M)\geq\mu^2(K(M))-\mu^1(K(M)).$$
\end{proof}

\begin{lemma}\cite[Proposition 2.8.4]{H}\label{CIlemma}
Suppose $R$ is $d$-dimensional with embedding dimension $e$. Then $\beta_1(R/\mathfrak{m})=e$ and the following statements are equivalent.
\begin{itemize}
    \item [(i)] $\beta_2(R/\mathfrak{m})=\binom{e}{2}+e-d$;
    \item [(ii)] $R$ is a complete intersection.
\end{itemize}
\end{lemma}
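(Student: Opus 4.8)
The statement is classical; I only indicate the strategy, since in the body of the paper it is invoked as a citation. Because completion is faithfully flat one has $\beta_i(R/\mathfrak m)=\beta_i(\widehat R/\widehat{\mathfrak m})$ for all $i$, and $R$ is a complete intersection precisely when $\widehat R$ is. Hence I may assume $R$ is complete and, by Cohen's structure theorem, write $R=P/I$ with $(P,\mathfrak n)$ a regular local ring, $\dim P=e$ equal to the embedding dimension, and $I\subseteq\mathfrak n^2$ (a minimal presentation); this forces $\mathfrak n/\mathfrak n^2\cong\mathfrak m/\mathfrak m^2$, and since regular local rings are Cohen--Macaulay domains, $\height I=\dim P-\dim R=e-d$. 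The first assertion is immediate: in a minimal free resolution $\cdots\to F_1\to F_0=R\to R/\mathfrak m\to 0$ the image of $F_1\to F_0$ is $\mathfrak m$, so $\beta_1(R/\mathfrak m)=\dim_k\mathfrak m/\mathfrak m^2=e$ by Nakayama.

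The crux is the identity
$$\beta_2(R/\mathfrak m)=\binom{e}{2}+\mu(I),$$
where $\mu(I)=\dim_k(I/\mathfrak nI)$ is the minimal number of generators of $I$. I would prove it by analysing the second syzygy $\Omega^2 k$ of $k$: lift a minimal generating sequence $x_1,\dots,x_e$ of $\mathfrak m$ to a regular system of parameters $\widetilde x_1,\dots,\widetilde x_e$ of $P$ and a minimal generating sequence $f_1,\dots,f_{\mu(I)}$ of $I$ to $P$; writing each $f_l$ as a $P$-linear combination of the $\widetilde x_i$ and reducing modulo $I$ produces $\mu(I)$ syzygies of $(x_1,\dots,x_e)$, and together with the $\binom e2$ Koszul relations these generate $\Omega^2 k$, because over the regular ring $P$ every syzygy of the regular sequence $(\widetilde x_1,\dots,\widetilde x_e)$ is Koszul. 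The real point is that this generating set is \emph{minimal}, i.e.\ $\dim_k\Tor^R_2(k,k)=\binom e2+\mu(I)$; this is where $I\subseteq\mathfrak n^2$ enters, and it can be extracted from the change-of-rings spectral sequence $\Tor^R_p(\Tor^P_q(R,k),k)\Rightarrow_p\Tor^P_{p+q}(k,k)$ — whose abutment is $\bigwedge^\bullet k^e$ since $P$ is regular and in which $\Tor^P_1(R,k)\cong I/\mathfrak nI$ — or, equivalently, from the minimal model of $R$, whose first deviations are $\varepsilon_1=e$ and $\varepsilon_2=\mu(I)$, combined with the formula $\beta_2(R/\mathfrak m)=\binom{\varepsilon_1}{2}+\varepsilon_2$.

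It remains to see that $\mu(I)=e-d$ if and only if $R$ is a complete intersection. By Krull's height theorem $\mu(I)\ge\height I=e-d$. If equality holds, $I$ is generated by $\height I$ elements, and since $P$ is Cohen--Macaulay any ideal of height $c$ generated by $c$ elements is generated by a $P$-regular sequence, so $R=P/I$ is a complete intersection. Conversely, if $R$ is a complete intersection then, successively splitting off any generator of the defining ideal lying outside the square of the maximal ideal (which keeps the ambient ring regular and keeps the ideal cut out by a regular sequence), one arrives by uniqueness of the minimal Cohen presentation at our $P/I$ with $I$ generated by a $P$-regular sequence; such a sequence has length $\operatorname{grade}I=\height I=e-d$ and, all its terms lying in $\mathfrak n$, minimally generates $I$ (its Koszul complex resolves $R$ with differentials zero modulo $\mathfrak n$), so $\mu(I)=e-d$. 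Together with the displayed identity this gives $\beta_2(R/\mathfrak m)=\binom e2+e-d$ if and only if $R$ is a complete intersection, which is the equivalence of (i) and (ii).

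The step I expect to be the genuine obstacle is the identity $\beta_2(R/\mathfrak m)=\binom e2+\mu(I)$ — precisely the minimality assertion that the Koszul relations among the $x_i$ and the $\mu(I)$ relations coming from $I$ admit no $R$-linear dependence modulo $\mathfrak m$; the rest is routine bookkeeping with heights, regular sequences, and faithfully flat base change to the completion.
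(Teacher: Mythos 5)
The paper offers no argument here at all: the lemma is quoted verbatim from \cite[Proposition 2.8.4]{H} and used as a black box, so there is no in-paper proof to measure you against; the only comparison is with the standard literature, and your sketch is indeed the standard route. The skeleton is correct: reduction to the completion, a minimal Cohen presentation $\widehat R=P/I$ with $P$ regular, $\dim P=e$ and $I\subseteq\mathfrak n^2$; the observation $\beta_1(k)=\dim_k\mathfrak m/\mathfrak m^2=e$; the identity $\beta_2(k)=\binom{e}{2}+\mu(I)$ (with $\mu(I)$ the minimal number of generators); and the height argument $\mu(I)\ge e-d$ with equality precisely when $I$ is generated by a $P$-regular sequence, plus the splitting-off reduction for the converse --- all of this is sound, and your remark that a regular sequence contained in $\mathfrak n$ minimally generates the ideal it generates (Koszul differentials vanish mod $\mathfrak n$) correctly closes the complete intersection direction.

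The one substantive claim you do not actually prove --- and you say so yourself --- is $\beta_2(k)=\binom{e}{2}+\mu(I)$. Be aware of exactly where the work sits: the five-term exact sequence of the change-of-rings spectral sequence $\Tor^R_p(k,\Tor^P_q(R,k))\Rightarrow\Tor^P_{p+q}(k,k)$ gives $\Tor^P_2(k,k)\to\Tor^R_2(k,k)\to I/\mathfrak n I\to 0$ (right exactness because $\mathfrak n/\mathfrak n^2\to\mathfrak m/\mathfrak m^2$ is bijective), but it does not by itself give injectivity of the left-hand map $\bigwedge^2(\mathfrak m/\mathfrak m^2)\to\Tor^R_2(k,k)$, which is precisely the ``minimality'' you flag; one must either prove that injectivity directly (e.g.\ by a Koszul-complex computation of $\Tor^R_{\le 2}(k,k)$ via the Tate construction in degrees at most two) or invoke Gulliksen's theorem that the acyclic closure is a minimal resolution, equivalently the deviation formula $\beta_2(k)=\binom{\varepsilon_1}{2}+\varepsilon_2$ with $\varepsilon_2=\mu(I)$. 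Since the statement is itself only cited in the paper, outsourcing this step to the standard references is reasonable, but as a self-contained proof that is the piece that still has to be written out; everything else in your proposal is routine and correct.
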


\begin{corollary}\label{CIchar}
If $R$ is $d$-dimensional of embedding dimension $e$, then $$\mu^2(k)-\mu^1(k)=\binom{e}{2}-d$$ if and only if $R$ is a complete intersection.
\end{corollary}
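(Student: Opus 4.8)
The plan is to deduce Corollary \ref{CIchar} by combining the Artinian lemma (Corollary \ref{artinianlemma}) with the complete intersection criterion of Lemma \ref{CIlemma}, applied to a single well-chosen module. The obvious candidate is $M = k = R/\mathfrak{m}$ itself, which is a finite Artinian $R$-module of dimension $t = 0$. First I would record that, since $t = 0$, Corollary \ref{artinianlemma} gives
$$\beta_2(k) - \beta_1(k) = \mu^2(K(k)) - \mu^1(K(k)).$$
Next I would identify the canonical module $K(k)$: by definition $K(k) = K^0(k) = \Ext^s_S(k,S)$, and since $S$ is Gorenstein of dimension $s$ and $k$ has depth $0$, local duality (or the standard computation $\Ext^s_S(k,S) \cong k$) shows $K(k) \cong k$. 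Hence $\mu^i(K(k)) = \mu^i(k)$ for all $i$, and the displayed equality becomes $\beta_2(k) - \beta_1(k) = \mu^2(k) - \mu^1(k)$.

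Now I would bring in Lemma \ref{CIlemma}. It tells us that $\beta_1(k) = e$ always, and that $R$ is a complete intersection if and only if $\beta_2(k) = \binom{e}{2} + e - d$, equivalently $\beta_2(k) - \beta_1(k) = \binom{e}{2} + e - d - e = \binom{e}{2} - d$. Stringing these together, $R$ is a complete intersection $\iff \beta_2(k) - \beta_1(k) = \binom{e}{2} - d \iff \mu^2(k) - \mu^1(k) = \binom{e}{2} - d$, which is exactly the claim. The whole argument is a short chain of equivalences once the identification $K(k) \cong k$ is in place.

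The main obstacle — really the only point requiring any care — is confirming that $K(k) \cong k$ as $R$-modules (not merely as $S$-modules), so that its Bass numbers over $R$ are the Bass numbers of $k$ over $R$. This follows because the surjection $S \to R$ has kernel generated by a regular sequence only when $R$ is a complete intersection, so one should instead argue directly: $\Ext^i_S(k,S) = 0$ for $i < s$ and $\Ext^s_S(k,S) \cong k$ because $k$ is an $S$-module of depth $0$ over the Gorenstein ring $S$, and this $k$ carries its natural $R$-module (indeed $k$-vector-space) structure since it is annihilated by $\mathfrak{m}$. Thus $\mu^i_R(K(k)) = \mu^i_R(k)$, and everything else is bookkeeping. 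One could alternatively invoke Corollary \ref{cmcanonicalmodule} with $M = k$ Cohen-Macaulay of dimension $0$, which gives $K(K(k)) \cong k$ and $K(k)$ Cohen-Macaulay; combined with the fact that a nonzero Cohen-Macaulay module of dimension $0$ with $K(K(k)) \cong k$ forces $K(k) \cong k$, this is an equivalent route.
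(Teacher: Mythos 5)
Your proposal is correct and follows essentially the same route as the paper: the paper's proof is exactly the combination of Corollary \ref{artinianlemma} applied to $M=k$ with Lemma \ref{CIlemma}, and you supply the details it leaves implicit, namely $K(k)=\Ext^s_S(k,S)\simeq k$ (so $\mu^i(K(k))=\mu^i(k)$) and the arithmetic $\beta_2(k)-\beta_1(k)=\binom{e}{2}-d$ iff $R$ is a complete intersection. The only loose point is the side remark about the alternative route via Corollary \ref{cmcanonicalmodule} (the claim that $K(K(k))\simeq k$ ``forces'' $K(k)\simeq k$ is not by itself an argument), but your main argument does not rely on it.
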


\begin{proof}
It follows directly from Corollary \ref{artinianlemma} and Lemma \ref{CIlemma}.
\end{proof}

\begin{corollary}\label{finitepd}
Let $M$ be a finite $R$-module of depth $g$ and dimension $t$. If $\id_RK^i(M)<\infty$ for all $i=g,...,t$, then $\pd_RM<\infty$.
\end{corollary}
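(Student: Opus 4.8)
The plan is to mirror the argument for Corollary \ref{finiteid}, simply replacing Theorem \ref{mu<beta} by its dual counterpart Theorem \ref{beta<mu}. First I would translate the hypothesis into the vanishing of high Bass numbers: for any finite $R$-module $N$ one has $\mu^l(N)=0$ whenever $l>\id_RN$, so the assumption $\id_RK^i(M)<\infty$ for each $i\in\{g,\dots,t\}$ yields, after taking $c:=\max_{g\leq i\leq t}\id_RK^i(M)<\infty$, that $\mu^l(K^i(M))=0$ for all $l>c$ and all $g\leq i\leq t$.

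Next I would invoke the inequality in Theorem \ref{beta<mu}, which gives
$$\beta_j(M)\leq\sum_{i=g}^{t}\mu^{j+i}(K^i(M))$$
for every $j\geq0$. As soon as $j>c$, each index $j+i$ exceeds $c$, so every term on the right-hand side vanishes and hence $\beta_j(M)=0$. Since over a Noetherian local ring $\pd_RM=\sup\{j:\beta_j(M)\neq0\}$, the vanishing of all sufficiently large Betti numbers forces $\pd_RM\leq c<\infty$, which is the assertion.

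I do not expect any genuine obstacle: all the real work is packaged inside Theorem \ref{beta<mu}, and the deduction is the precise dual of the one carried out in Corollary \ref{finiteid}. The only points that need a little care are bookkeeping — choosing the bound $c$ uniformly over the finitely many deficiency modules $K^g(M),\dots,K^t(M)$, and quoting the standard fact that finite projective dimension is equivalent to eventual vanishing of the Betti numbers.
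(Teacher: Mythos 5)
Your proof is correct and follows essentially the same route as the paper: translate finite injective dimension of the $K^i(M)$ into vanishing of high Bass numbers, feed this into the inequality of Theorem \ref{beta<mu} to kill $\beta_j(M)$ for $j\gg0$, and conclude $\pd_RM<\infty$. The only difference is your explicit uniform bound $c$, which the paper leaves implicit in the ``$j\gg0$'' phrasing.
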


\begin{proof}
By hypothesis, we have $\mu^l(K^i(M))=0$ for all $l\gg0$ and by Theorem \ref{beta<mu} one has
$$\beta_j(M)\leq\sum_{i=g}^t\mu^{j+i}(K^i(M))=0$$ for all $j\gg0$, whence $\mu^j(M)=0$ for all $j\gg0$, that is, $\pd_RM<\infty$.
\end{proof}

The \emph{Auslander-Reiten conjecture} \cite{AR} states the following. Given a finite $R$-module $M$, if $$\Ext^j_R(M,M\oplus R)=0$$ for all $j>0$, then $M$ is free.
This long-standing conjecture has been largely studied and several positive answers are already known, see for instance \cite{A,AY,AB,DEL,FJP,HL,LM,NS}. Corollary \ref{finitepd} provides another positive answer for the Auslander-Reiten conjecture for a class of modules. But first, we need a lemma.

\begin{lemma}\cite[Lemma 1 (iii)]{M}\label{pdfinite}
Let $R$ be a local ring and let $M$ and $N$ be finite $R$-modules. If $\pd_RM<\infty$ and $N\neq0$, then

$$\pd_RM=\sup\{j:\Ext^j_R(M,N)\neq0\}.$$
\end{lemma}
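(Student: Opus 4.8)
The plan is to prove the claimed equality by establishing the two inequalities separately, the non-trivial direction resting on a minimal free resolution argument in which the hypothesis $N\neq0$ enters only through Nakayama's lemma.

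First I would dispose of the inequality $\sup\{j:\Ext^j_R(M,N)\neq0\}\leq\pd_RM$. Writing $p=\pd_RM<\infty$, there is a projective resolution of $M$ concentrated in homological degrees $0,\dots,p$, and computing $\Ext^j_R(M,N)$ from it immediately yields $\Ext^j_R(M,N)=0$ for all $j>p$. This step uses nothing about $N$.

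For the reverse inequality it suffices to show $\Ext^p_R(M,N)\neq0$. Here I would take a minimal free resolution
$$F_\bullet\colon\quad 0\to F_p\xrightarrow{\ d_p\ }F_{p-1}\to\cdots\to F_0\to M\to0,$$
which exists since $R$ is Noetherian local and $M$ is finite, with $F_p=R^{\beta_p(M)}\neq0$ because $p=\pd_RM$. By minimality every entry of the matrix of $d_p$ lies in $\mathfrak{m}$. Applying $\Hom_R(-,N)$, one has $\Ext^p_R(M,N)=\coker\big(\Hom_R(d_p,N)\colon\Hom_R(F_{p-1},N)\to\Hom_R(F_p,N)\big)$, and since $d_p$ has entries in $\mathfrak{m}$ the image of $\Hom_R(d_p,N)$ is contained in $\mathfrak{m}\Hom_R(F_p,N)$. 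Consequently there is a surjection $\Ext^p_R(M,N)\twoheadrightarrow\Hom_R(F_p,N)\otimes_Rk\cong(N\otimes_Rk)^{\beta_p(M)}$. As $N\neq0$ is finite, Nakayama's lemma gives $N\otimes_Rk\neq0$, and $\beta_p(M)\geq1$, so $\Ext^p_R(M,N)\neq0$; hence $\sup\{j:\Ext^j_R(M,N)\neq0\}\geq p$, and the two inequalities together give the equality.

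There is essentially no serious obstacle in this argument; the only points demanding a modicum of care are the existence of a finite minimal free resolution realizing $\pd_RM$ over a Noetherian local ring and the standard observation that minimality forces the differentials into $\mathfrak{m}$. Since the statement is in any case quoted from \cite{M}, one may alternatively simply invoke that reference, but the short proof above is self-contained.
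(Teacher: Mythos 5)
Your argument is correct: the vanishing above $\pd_RM$ is immediate, and the non-vanishing of $\Ext^{p}_R(M,N)$ via a minimal free resolution (so that the last differential lands in $\mathfrak{m}$, giving a surjection onto $(N\otimes_Rk)^{\beta_p(M)}\neq0$ by Nakayama) is exactly the standard proof. The paper itself offers no proof, quoting the result from \cite{M}, and your argument is essentially the one given in that reference, so there is nothing to add.
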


\begin{theorem}\label{arconjtheorem}
Let $M$ be a finite $R$-module of depth $g$ and dimension $t$. $M$ is free provided the following statements hold.
\begin{itemize}
    \item [(i)] $\id_RK^i(M)<\infty$ for all $i=g,...,t$;
    \item [(ii)] There exists an $R$-module $N$ such that $\Ext^j_R(M,N)=0$ for all $j=1,...,d$.
\end{itemize}
\end{theorem}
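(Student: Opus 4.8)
The plan is to deduce the statement from two results already in hand: Corollary \ref{finitepd}, which converts hypothesis (i) into the finiteness of $\pd_R M$, and Lemma \ref{pdfinite}, which reads $\pd_R M$ off from the vanishing behaviour of $\Ext^j_R(M,N)$. The bridge between them is the Auslander--Buchsbaum formula, used only to confine $\pd_R M$ to the range of degrees controlled by hypothesis (ii).

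Concretely, hypothesis (i) together with Corollary \ref{finitepd} gives $\pd_R M < \infty$. The Auslander--Buchsbaum formula then yields $\pd_R M = \depth R - \depth_R M = \depth R - g$, and since $\depth R \le \dim R = d$ and $g \ge 0$ we obtain $\pd_R M \le d$. Let $N$ be the module furnished by (ii); as in the intended application $N = M \oplus R$, we take $N$ to be a nonzero finite $R$-module (if $N = 0$ then hypothesis (ii) is vacuous and carries no information). Because $\pd_R M < \infty$ and $N \neq 0$, Lemma \ref{pdfinite} gives
$$\pd_R M = \sup\{\, j \ge 0 : \Ext^j_R(M,N) \neq 0 \,\}.$$
Were $\pd_R M \ge 1$, this supremum would be realized by some index $j$ with $1 \le j \le \pd_R M \le d$, forcing $\Ext^j_R(M,N) \neq 0$ for some $j \in \{1,\dots,d\}$ and contradicting (ii). Hence $\pd_R M = 0$; being finitely generated over a local ring, $M$ is therefore free.

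The only point that requires genuine care is matching the two ranges of indices: one must check that the interval $\{1,\dots,d\}$ appearing in (ii) really exhausts every positive degree in which $\Ext^j_R(M,N)$ might fail to vanish. This is exactly the inequality $\pd_R M \le d$, which is why the Auslander--Buchsbaum formula (together with $\depth R \le \dim R$) is not cosmetic here. Beyond that, the argument is purely formal, and the resulting case of the Auslander--Reiten conjecture follows by taking $N = M \oplus R$.
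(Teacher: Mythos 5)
Your proof is correct and follows the same route as the paper, which simply combines Corollary \ref{finitepd} with Lemma \ref{pdfinite}; your extra step via the Auslander--Buchsbaum formula to guarantee $\pd_RM\leq d$ (and your remark that $N$ should be a nonzero finite module for Lemma \ref{pdfinite} to apply) just makes explicit what the paper's ``it follows directly'' leaves implicit.
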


\begin{proof}
It follows directly from Corollary \ref{finitepd} and Lemma \ref{pdfinite}.
\end{proof}

The next corollary proves the Auslander-Reiten conjecture for a certain class of modules. It generalizes the case of the conjecture obtained in \cite{FJP}.

\begin{corollary}\label{AR}
The Auslander-Reiten conjecture holds for finite modules having deficiency modules of finite injective dimension over local rings which are factors of Gorenstein local rings.
\end{corollary}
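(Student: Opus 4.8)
The plan is to specialize Theorem \ref{arconjtheorem}, which already carries all the weight. Let $M$ be a finite $R$-module all of whose deficiency modules have finite injective dimension, and suppose $M$ satisfies the hypothesis of the Auslander--Reiten conjecture, namely $\Ext^j_R(M,M\oplus R)=0$ for all $j>0$; we must deduce that $M$ is free. Write $g=\depth_RM$ and $t=\dim_RM$. The first step is the observation that the deficiency modules $K^i(M)$ with $i<g$ vanish (indeed $K^i(M)=\Ext^{s-i}_S(M,S)=0$ for $i<g$, as in the proof of Theorem \ref{mu<beta}), so the standing hypothesis ``$\id_RK^i(M)<\infty$ for every deficiency module'' is exactly condition (i) of Theorem \ref{arconjtheorem}, concerning $K^g(M),\dots,K^t(M)$.

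The second step is to supply the test module demanded by condition (ii) of Theorem \ref{arconjtheorem}. From $\Ext^j_R(M,M\oplus R)=0$ for all $j>0$ one reads off in particular $\Ext^j_R(M,R)=0$ for all $j\geq1$, hence certainly for $j=1,\dots,d$. Since $R$ is a nonzero ring, $N:=R$ is a nonzero finite $R$-module with $\Ext^j_R(M,N)=0$ for $j=1,\dots,d$, so condition (ii) is met. Theorem \ref{arconjtheorem} then gives that $M$ is free, which is precisely the conclusion predicted by the Auslander--Reiten conjecture; as $M$ ranged over an arbitrary module in the stated class, the corollary follows.

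There is no genuine obstacle here: all the real content has already been packaged into Theorem \ref{arconjtheorem}, which in turn rests on Corollary \ref{finitepd} (converting finiteness of the $\id_RK^i(M)$ into finiteness of $\pd_RM$) and Lemma \ref{pdfinite} (detecting $\pd_RM$ through vanishing of $\Ext$ against a nonzero target). The only point worth flagging is that Lemma \ref{pdfinite} requires a nonzero target, which is why one takes $N=R$; any nonzero summand of $M\oplus R$ would serve equally well. It is also worth remarking, for comparison with \cite{FJP}, that imposing finite injective dimension on all deficiency modules is weaker than the hypotheses used there, so Corollary \ref{AR} genuinely enlarges the class of modules for which the conjecture is known.
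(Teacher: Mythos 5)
Your argument is correct and follows the paper's own route: the paper proves Corollary \ref{AR} simply by invoking Theorem \ref{arconjtheorem}, and you merely spell out the routine verification (that $K^i(M)=0$ for $i<\depth_RM$, and that $N=R$ is a nonzero module with $\Ext^j_R(M,R)=0$ for $j=1,\dots,d$ under the Auslander--Reiten hypothesis). Nothing further is needed.
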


\begin{proof}
It follows immediately from Theorem \ref{arconjtheorem}.
\end{proof}
In the next theorem, such as Theorem \ref{foxbygeneralization2}, we furnish another attempt to remove the generalized Cohen-Macaulayness hypothesis from Theorem \ref{foxbygeneralization}.

\begin{theorem}\label{foxbygeneralization3}
Let $M$ be a finite $R$-module of depth $g$ and dimension $t$. If $\id_RK^i(M)<\infty$ for all $g\leq i<t$, then $$\beta_j(M)=\mu^{j+t}(K(M))$$ for all $j>s+\depth R-t-g$. In particular, $\pd_RM<\infty$ if and only if $\id_RK(M)<\infty$.
\end{theorem}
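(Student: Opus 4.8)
The plan is to mimic the proof of Theorem \ref{foxbygeneralization2}, but now using the spectral sequence \eqref{eq:2} from the previous section instead of \eqref{eq:1}. Recall that \eqref{eq:2} reads
$$E_2^{p,q}=\Ext^p_R(k,K^{s-q}(M))\Rightarrow_p\Ext^s_S(\Tor^R_{p+q-s}(k,M),S).$$
The first step is to exploit the hypothesis $\id_RK^i(M)<\infty$ for $g\le i<t$: for such $i$ one has $\mu^p(K^i(M))=0$ for all $p$ exceeding $\id_RK^i(M)\le\depth R$ (using the Bass formula, since $\id_RK^i(M)<\infty$ forces it to equal $\depth R$, cf.\ \cite[Theorem 3.7.1]{BH}). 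Thus $E_2^{p,q}=\Ext^p_R(k,K^{s-q}(M))=0$ whenever $s-t<q<s-g$ and $p>\depth R$. Also $E_2^{p,q}=0$ for $q<s-t$ or $q>s-g$ as noted in the proof of Theorem \ref{beta<mu}. So on the line $p+q=j+s$, the only potentially nonzero $E_2$-term that is \emph{not} killed by the vanishing range is $E_2^{j+g,s-g}=\Ext^{j+g}_R(k,K^g(M))$ (when $j+g\le\depth R$) and $E_2^{j+t,s-t}=\Ext^{j+t}_R(k,K(M))=k^{\mu^{j+t}(K(M))}$. For $j>s+\depth R-t-g$ we have $j+g>s+\depth R-t$; combined with the constraint $p\le\depth R$ needed for the intermediate columns, one checks that all incoming and outgoing differentials at $E_2^{j+t,s-t}$ land in zero groups, so the spectral sequence degenerates along that antidiagonal.

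The second step is the bookkeeping: I would carefully verify that for $j>s+\depth R-t-g$ every differential $d_r$ into or out of the surviving spot $E_r^{j+t,s-t}$ has source or target inside the vanishing region. A differential out of $E_r^{j+t,s-t}$ goes to $E_r^{j+t+r,s-t-r+1}$, which has second index $<s-t$, hence vanishes. A differential into $E_r^{j+t,s-t}$ comes from $E_r^{j+t-r,s-t+r-1}$; if $r\ge2$ the second index lies in $(s-t,s-g]$ (as long as $r-1<t-g$) and the first index is $j+t-r$; I need $j+t-r>\depth R$, i.e.\ $r<j+t-\depth R$, which holds for all relevant $r\le t-g$ precisely because $j>s+\depth R-t-g$ forces $j+t-\depth R>s-g+t-t = s-g \ge t-g$ (using $s\ge t$). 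For $r-1\ge t-g$ the target second index exceeds $s-g$ and again the term is zero. Hence $E_\infty^{j+t,s-t}\simeq E_2^{j+t,s-t}$, and since this is the only nonzero $E_\infty$-term on the antidiagonal $p+q=j+s$, convergence gives $\Ext^s_S(\Tor^R_j(k,M),S)\simeq\Ext^{j+t}_R(k,K(M))$.

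The third step simply translates dimensions of $k$-vector spaces: as in the proof of Theorem \ref{mu<beta}, $\Ext^s_S(\Tor^R_j(k,M),S)\simeq k^{\beta_j(M)}$, so $\beta_j(M)=\mu^{j+t}(K(M))$ for all $j>s+\depth R-t-g$. Finally, for the ``in particular'' clause: if $\id_RK(M)<\infty$ then $\mu^{j+t}(K(M))=0$ for $j\gg0$, so $\beta_j(M)=0$ for $j\gg0$, i.e.\ $\pd_RM<\infty$; conversely if $\pd_RM<\infty$ then $\beta_j(M)=0$ for $j\gg0$, so $\mu^{j+t}(K(M))=0$ for $j\gg0$, which together with the finiteness of $\id_RK^i(M)$ for $i<t$ and Corollary \ref{finitepd}-type reasoning — or more directly, the Bass number vanishing in high degrees — gives $\id_RK(M)<\infty$.

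The main obstacle I anticipate is the differential bookkeeping in the second step: one must be scrupulous about exactly which antidiagonal terms survive and confirm that the single surviving term is genuinely isolated (no differentials to \emph{or} from it), since a sloppy index could allow a spurious $d_r$ from a column with $p\le\depth R$. The inequality $j>s+\depth R-t-g$ is precisely calibrated to push the critical first index $j+g$ past $\depth R$, and getting that threshold right (rather than off by one, or missing the role of $s\ge t$) is where the care is needed.
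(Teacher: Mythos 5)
Your proposal follows essentially the same route as the paper's proof: the paper also runs the spectral sequence (\ref{eq:2}), uses the Bass formula \cite[Theorem 3.7.1]{BH} to kill the rows $s-t<q\le s-g$ in the columns $p>\depth R$, and reads the isomorphism $\Ext^{j+t}_R(k,K(M))\simeq\Ext^s_S(\Tor^R_j(k,M),S)$ off from convergence; your write-up simply makes the differential bookkeeping explicit where the paper only asserts it.

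One slip in that bookkeeping deserves flagging. You dismiss the incoming differentials $d_r\colon E_r^{j+t-r,\,s-t+r-1}\to E_r^{j+t,\,s-t}$ with $r-1\ge t-g$ on the grounds that the source's second index \emph{exceeds} $s-g$; at $r=t-g+1$ it \emph{equals} $s-g$, so the source is $\Ext^{j+g-1}_R(k,K^g(M))$ --- a row covered by the hypothesis (since $i=g$ is included in $g\le i<t$), but whose vanishing needs $j+g-1>\depth R$, not merely $j+g>\depth R$ as in your calibration remark. Under the stated range $j>s+\depth R-t-g$ one gets $j+g-1\ge s+\depth R-t\ge\depth R$, which is strict precisely when $s>t$; so for $s>t$ your argument is complete at the stated threshold, but in the extreme case $s=t$ the single borderline value $j=\depth R-g+1$ is not settled by it, since the possibly nonzero $d_{t-g+1}$ out of $\Ext^{\depth R}_R(k,K^g(M))$ (a top Bass number, nonzero when $K^g(M)\neq0$ has finite injective dimension) could a priori shrink $E_\infty^{j+t,s-t}$ below $\Ext^{j+t}_R(k,K(M))$. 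The paper's own proof glosses over exactly this point (its displayed threshold even carries a typo), and the ``in particular'' clause, which only needs $j\gg0$, is untouched; but to claim the equality at the stated threshold in full generality you should either shift the bound by one or treat the case $s=t$ separately. (Also, your step-one vanishing range is written as $s-t<q<s-g$, though you clearly intend and later use $s-t<q\le s-g$.)
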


\begin{proof}
Consider the spectral sequence \ref{eq:2}
$$E_2^{p,q}=\Ext^p_R(k,K^{s-q}(M))\Rightarrow_p\Ext^s_S(\Tor^R_{p+q-s}(k,M),S).$$ The hypothesis and \cite[Theorem 3.7.1]{BH} assures that $E_2^{p,q}=0$ for all $p>\depth R$ and for all $s-t<q\geq s-g$. Therefore, the convergence of $E$ implies that $$\Ext^j_R(k,K(M))\simeq\Ext^s_S(\Tor^R_{j-t}(k,M),S)$$ for all $j>s-\depth R-g$, whence the result.
\end{proof}

The next proposition is an attempt to understand the converse of Corollary \ref{finitepd}.

\begin{proposition}\label{t=g+rfinitepd}
Assume $K^i(M)=0$ for all $i\neq g,t$. If $\pd_RM<\infty$, then $\mu^j(K^g(M))=\mu^{j-g+t+1}(K(M))$ for all $j>\pd_RM+1$.
\end{proposition}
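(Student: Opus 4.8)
The statement is the projective-dimension counterpart of Proposition \ref{t=g+rfiniteid}, so the plan is to run the same machine with the spectral sequence \ref{eq:2} in the role of \ref{eq:1}. Recall from the proof of Theorem \ref{beta<mu} the first quadrant spectral sequence
$$E_2^{p,q}=\Ext^p_R(k,K^{s-q}(M))\Rightarrow_p\Ext^s_S(\Tor^R_{p+q-s}(k,M),S),$$
whose total-degree-$n$ abutment satisfies $H^n\simeq\Ext^s_S(\Tor^R_{n-s}(k,M),S)\simeq k^{\beta_{n-s}(M)}$, the last isomorphism because $\Ext^s_S(k,S)^\vee\simeq k$. Under the hypothesis $K^i(M)=0$ for $i\neq g,t$ one has $E_2^{p,q}=0$ unless $q=s-t$ or $q=s-g$; that is, the $E_2$-page is supported on exactly two rows, the lower one carrying $\Ext^\bullet_R(k,K(M))$ and the upper one carrying $\Ext^\bullet_R(k,K^g(M))$, a distance $r:=t-g$ apart. (Both rows sit in the first quadrant, since $s-t\geq0$ and $s-g\geq0$.)

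First I would invoke the standard analysis of a two-row spectral sequence: all differentials vanish except $d_{r+1}$, which runs from the $K^g(M)$-row to the $K(M)$-row, so $E_2=E_{r+1}$ and $E_{r+2}=E_\infty$, and convergence produces, for each $j$, a four-term exact sequence
$$k^{\beta_{j-g}(M)}\longrightarrow\Ext^j_R(k,K^g(M))\xrightarrow{d_{r+1}}\Ext^{j-g+t+1}_R(k,K(M))\longrightarrow k^{\beta_{j-g+1}(M)},$$
whose outer terms are the abutment contributions $H^{j-g+s}\simeq\Ext^s_S(\Tor^R_{j-g}(k,M),S)$ and $H^{j-g+s+1}\simeq\Ext^s_S(\Tor^R_{j-g+1}(k,M),S)$, and whose middle arrow carries the displayed bidegree precisely because the two rows lie a distance $r=t-g$ apart. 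Since $\pd_RM<\infty$ forces these Betti numbers of $M$ to vanish for all sufficiently large $j$, the map $d_{r+1}$ is an isomorphism in that range; comparing $k$-dimensions then gives $\mu^j(K^g(M))=\mu^{j-g+t+1}(K(M))$, which is the assertion.

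The substance is entirely in the bookkeeping, and it mirrors the proof of Proposition \ref{t=g+rfiniteid}: one must check that $d_{r+1}$---and no later differential---is the surviving one; compute its bidegree so that the $K(M)$-exponent comes out as $j-g+t+1$ and not a neighbouring value; and identify exactly which graded pieces of which $H^n$ flank $d_{r+1}$ in the two-row long exact sequence, so that what is required to vanish are the Betti numbers $\beta_{j-g}(M),\beta_{j-g+1}(M)$---governed by $\pd_RM$---rather than Bass numbers of $M$. Tracking when these Betti numbers vanish also pins down the precise range of $j$ for which the isomorphism holds. Apart from that, the argument is a transcription of the proof of Proposition \ref{t=g+rfiniteid} with the roles of $\Tor$ and $\Ext$ interchanged.
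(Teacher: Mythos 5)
Your argument is essentially the paper's own proof: you use the same spectral sequence \ref{eq:2}, observe its degeneration to the two rows $q=s-t$ and $q=s-g$, extract the same four-term exact sequence whose outer terms are $\Ext^s_S(\Tor^R_{j-g}(k,M),S)\simeq k^{\beta_{j-g}(M)}$ and $\Ext^s_S(\Tor^R_{j-g+1}(k,M),S)\simeq k^{\beta_{j-g+1}(M)}$, and conclude that $d_{r+1}\colon\Ext^j_R(k,K^g(M))\to\Ext^{j+t-g+1}_R(k,K(M))$ is an isomorphism once these vanish. The only loose end (present in the paper as well, which asserts the range $j>\pd_RM+1$ without comment) is that the outer terms vanish precisely when $j-g>\pd_RM$, so the range this argument literally yields is $j>\pd_RM+g$; your ``sufficiently large $j$'' phrasing defers rather than verifies the stated bound.
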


\begin{proof}
The spectral sequence \ref{eq:2} has only two lines as follows
$$\xymatrix@=1em{
0 & 0 & \cdots & 0 & \cdots
\\
\vdots & \vdots &  & \vdots
\\
\Hom_R(k,K^g(M))\ar[ddrrr] & \Ext^1_R(k,K^g(M)) & \cdots & \Ext^{p+r+1}_R(k,K^g(M)) & \cdots
\\
\vdots & \vdots & \ddots & \vdots
\\
\Hom_R(k,K(M)) & \Ext^1_R(k,K(M)) & \cdots & \Ext^{p+r+1}_R(k,K(M)) & \cdots
\\
0 & 0 & \cdots & 0 & \cdots
\\
\vdots & \vdots & & \vdots}$$
Such a shape and convergence yields an exact sequence
$$\xymatrix@=1em{
\Ext^s_R(\Tor^R_{j-g}(k,M),S)\ar[r] & \Ext^j_R(k,K^g(M))\ar[r] & \Ext^{j+r+1}_R(k,K(M))\ar[r] & \Ext^s_S(\Tor^R_{j-g+1}(k,M),S)}$$ for all $j\geq0$. Thus, if $j>\pd_RM+1$, then $$\Ext^j_R(k,K^g(M))\simeq\Ext^{j+r+1}_R(k,K(M))$$ and, in particular, $\mu^j(K^g(M))=\mu^{j+r+1}(K(M))$.
\end{proof}

Corollary \ref{finitepd} and Proposition \ref{t=g+rfinitepd} lead us to ask the following.

\begin{question}\label{question2}
Let $M$ be a finite $R$-module of depth $g$ and dimension $t$. Is it true that $$\pd_RM<\infty \Leftrightarrow \id_RK^i(M)<\infty, \forall i=g,...t?$$
\end{question}

\section*{Acknowledgements}
The authors would like to thank Marc Chardin and Victor Hugo Jorge Pérez for useful comments, suggestions and encouragement. We also thank Thiago Henrique de Freitas for suggesting Corollary \ref{CIchar} and for his valuable comments.

\end{document}